\DeclareMathOperator{\pos}{pos}
\DeclareMathOperator{\tcone}{tcone}
\DeclareMathOperator{\simplex}{simplex}
\newtheorem*{rep@theorem}{\rep@title}
\newcommand{\newreptheorem}[2]{%
\newenvironment{rep#1}[1]{%
 \def\rep@title{#2 \ref{##1}'}%
 \begin{rep@theorem}}%
 {\end{rep@theorem}}}
\theoremstyle{plain}
\newtheorem{theorem}{Theorem}
\newtheorem{lemma}{Lemma}
\newtheorem{corollary}[lemma]{Corollary}
\newtheorem{claim}{Claim}
\theoremstyle{definition}
\newtheorem*{rep@definition}{\rep@title}
\newcommand{\newrepdefinition}[2]{%
\newenvironment{rep#1}[1]{%
 \def\rep@title{#2 \ref{##1}'}%
 \begin{rep@definition}}%
 {\end{rep@definition}}}
\newtheorem{definition}{Definition}
\theoremstyle{remark}
\newtheorem{remark}[lemma]{Remark}
\newcounter{fig}
\renewcommand{\figure}{\refstepcounter{fig}%
 Fig. \arabic{fig}. }
\title{Algebraic vertices of non-convex polyhedra}
\author{Arseniy~Akopyan}
\address{Arseniy~Akopyan, Institute of Science and Technology Austria (IST Austria), Am Campus~1, 3400 Klosterneuburg, Austria}
\email{akopjan@gmail.com}
\author{Imre B\'ar\'any} 
\address{Imre B\'ar\'any, MTA Alfr\'ed R\'enyi Institute of Mathematics,
PO Box 127, H-1364 Budapest, Hungary
and Department of Mathematics, University College London,
Gower Street, London, WC1E 6BT, U.K.}
\email{barany@renyi.hu}
\author{Sinai Robins}
\address{Sinai Robins, Institute of Mathematics and Statistics, University of S\~ao Paulo, Brasil}
\email{sinai.robins@gmail.com}
\keywords{Polytope algebra, vertices, tangent cones, Fourier--Laplace transform}
\subjclass[2010]{Primary 52B11, 51M20, secondary 44A10}
\begin{document}

\begin{abstract}
	In this article we define an \textbf{algebraic vertex} of a generalized polyhedron and show that the set of algebraic vertices is the smallest set of points needed to define the polyhedron.  We prove that the indicator function of a generalized polytope $P$ is a linear combination of indicator functions of simplices whose vertices are algebraic vertices of $P$.
	We also show that the indicator function of any generalized polyhedron is a linear combination, with integer coefficients, of indicator functions of cones with apices at algebraic vertices and line-cones.
	
	The concept of an algebraic vertex is closely related to the Fourier--Laplace transform.
	We show that a point $\mathbf{v}$ is an algebraic vertex of a generalized polyhedron $P$ if and only if the tangent cone of $P$, at $\mathbf{v}$,  has non-zero Fourier--Laplace transform.
	
\bigskip
\noindent

\end{abstract}

\maketitle

\section{Introduction}

We study the vertices of \textbf{non-convex polyhedra}, which we also call \textbf{generalized polyhedra}, and which we define as the finite union of convex polyhedra in $\mathbf R^d$.




There are many different ways to define a vertex of a generalized polyhedron $P$, most of them based on properties of the tangent cone to $P$ at a point $\mathbf{v} \in P$.
The tangent cone at $\mathbf{v}$, which we write as $\tcone(P, \mathbf{v})$, is intuitively the collection of all directions that we can `see' if we stand at $\mathbf{v}$ and look into $P$ (see Section~\ref{sec:algebra of polyhedra} for a rigorous definition).
We furthermore define a {\bf line-cone} to be a cone that is the union of parallel lines.

One approach is to say that a point ${\mathbf{v}}$ is a vertex of a generalized polyhedron $P$ if its tangent cone is not a {\bf line-cone}.
We call such a point $\mathbf{v}$ a {\bf geometric vertex} of a generalized polyhedron, and it is clear that for a {\em convex} polyhedron this definition coincides with the usual definition of vertices (see the last Section).
In this article we focus on another definition of vertices.

\begin{definition} \label{def:indicator definition of Fourier vertex through cones}
	For a generalized polyhedron $P$, a point $\mathbf{v} \in P$ is called an {\bf algebraic vertex} of $P$ if the indicator function of its tangent cone $\tcone(P, \mathbf{v})$ cannot be represented (up to a set of measure zero) as a linear combination of indicator functions of line-cones.
\end{definition}

The theorem of D.~Frettl\"{o}h and A.~Glazyrin \cite{frettloh2009lonely} states that the indicator function of a convex cone which is not a line-cone cannot be represented as a sum of indicator functions of line-cones, implying that the vertices of an ordinary convex polytope are indeed algebraic vertices.


Our main result is the following description of algebraic vertices, showing that in some sense these generalized vertices form a minimal set of points needed to describe a \textbf{generalized polytope}, which is by definition a bounded generalized polyhedron.

Throughout, we denote the \textbf{indicator function} of any set $S \subset \mathbf R^d$ by $[S]$.
 In other words $[S](x)=1$ if $x \in S$, and $[S](x)=0$ if $x \notin S$.

%
%

\begin{theorem}
	\label{thm:sum of simplices}
	Let $\mathcal{V}_P$ be the set of algebraic vertices of a generalized polytope $P \subset \mathbf R^d$, and let~$\mathcal{T}_P$ be the set of simplices whose vertices lie in $\mathcal{V}_P$.
	Then
	\begin{equation}
		\label{eq:polytope sum of simplices}
		[P]=\sum_{T_i \in \mathcal{T}_P} \alpha_i [T_i],
	\end{equation}
	where the $\alpha_i$ are integers and the equality holds throughout $\mathbf R^d$, except perhaps for a set of measure zero.

	Moreover, if $[P]$ is represented (up to measure zero) as a linear combination of indicator functions of some finite number of simplices, then the set of vertices of these simplices must contain~$\mathcal{V}_P$.
\end{theorem}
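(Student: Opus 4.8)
The plan for the displayed identity~(\ref{eq:polytope sum of simplices}) is to start from a crude triangulation of $P$ and then shrink the set of vertices it uses down to $\mathcal{V}_P$. First I would fix a finite set $\mathcal{H}$ of hyperplanes containing all affine hulls of facets of the convex polyhedra whose union is $P$. Then each full-dimensional cell $C$ of the arrangement $\mathcal{A}(\mathcal{H})$ has either $\interior C\subseteq P$ or $\interior C\cap P=\emptyset$, so $[P]=\sum[C]$ off a null set, the sum over cells with $\interior C\subseteq P$; triangulating each such cell without adding points gives $[P]=\sum_i\alpha_i[T_i]$ off a null set, with each $T_i$ a simplex whose vertices lie in the finite set $V$ of vertices of $\mathcal{A}(\mathcal{H})$ and each $\alpha_i\in\mathbf{Z}$. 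I would also record that $\mathcal{V}_P\subseteq V$: a point $\mathbf{x}\in P$ lying in the relative interior of a face of $\mathcal{A}(\mathcal{H})$ of positive dimension has $\tcone(P,\mathbf{x})$ invariant under translations along a nonzero subspace, hence a line-cone, hence $\mathbf{x}$ is not an algebraic vertex.

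The engine for the rest is a localization principle: if $\sum_j\beta_j[S_j]=[P]$ off a null set, then for every point $\mathbf{p}$ one has $\sum_{j:\,\mathbf{p}\in S_j}\beta_j[\tcone(S_j,\mathbf{p})]=[\tcone(P,\mathbf{p})]$ off a null set. Indeed, intersecting with a small ball about $\mathbf{p}$ discards the terms with $\mathbf{p}\notin S_j$ and lets one replace each remaining polyhedron by its tangent cone at $\mathbf{p}$, with which it agrees near $\mathbf{p}$; both sides are then finite combinations of indicator functions of cones with apex $\mathbf{p}$, hence constant along rays from $\mathbf{p}$, so agreement near $\mathbf{p}$ off a null set forces agreement everywhere off a null set. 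This already gives the ``moreover'' assertion: if $[P]=\sum_j\beta_j[S_j]$ off a null set and some algebraic vertex $\mathbf{v}$ of $P$ were a vertex of no $S_j$, then for each $S_j\ni\mathbf{v}$ the point $\mathbf{v}$ lies in the relative interior of a face of $S_j$ of dimension at least $1$, so $\tcone(S_j,\mathbf{v})$ is invariant under a nonzero translation, i.e.\ is a line-cone; by the localization principle $[\tcone(P,\mathbf{v})]$ is then a linear combination of indicator functions of line-cones, contradicting the fact that $\mathbf{v}$ is an algebraic vertex.

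It remains to shrink the vertex set of the crude triangulation from $V$ to $\mathcal{V}_P$, which I would do by deleting the points of $V\setminus\mathcal{V}_P$ one at a time. For $\mathbf{v}\in V\setminus\mathcal{V}_P$, the localization principle applied to $[P]=\sum_i\alpha_i[T_i]$ gives $\sum_{i:\,\mathbf{v}\in T_i}\alpha_i[\tcone(T_i,\mathbf{v})]=[\tcone(P,\mathbf{v})]$ off a null set, and the right-hand side is a linear combination of indicator functions of line-cones because $\mathbf{v}$ is not an algebraic vertex. I would use this to rewrite the ``star'' $\sum_{i:\,\mathbf{v}\in T_i}\alpha_i[T_i]$ as a combination of simplices that do not use $\mathbf{v}$ (and introduce no new points): the cells of $\mathcal{A}(\mathcal{H})$ incident to $\mathbf{v}$ can be amalgamated, with the appropriate multiplicities, into larger polytopal regions having $\mathbf{v}$ in the relative interior of no face --- this is possible precisely because the fan of their tangent cones at $\mathbf{v}$ is a line-cone combination --- and those regions are then triangulated without $\mathbf{v}$. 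Substituting and iterating over all of $V\setminus\mathcal{V}_P$ yields~(\ref{eq:polytope sum of simplices}). The main obstacle is exactly this amalgamation lemma, i.e.\ turning ``the tangent fan at $\mathbf{v}$ is a linear combination of line-cones'' into an honest cancellation of every simplex using $\mathbf{v}$; I expect to prove it by induction on the dimension, slicing by hyperplanes through $\mathbf{v}$, with the Frettl\"{o}h--Glazyrin theorem guaranteeing that genuine simplicial cones cannot be absorbed into line-cones and so pinning down the relevant combinatorics, while working inside the fixed arrangement $\mathcal{A}(\mathcal{H})$ ensures no stray new vertices appear.
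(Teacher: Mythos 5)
Your setup (the arrangement triangulation with $\mathcal{V}_P\subseteq V$, the localization principle, and the proof of the ``moreover'' statement via tangent cones) is sound, and the ``moreover'' argument is in fact the same as the paper's. But the core of the theorem --- producing a representation that uses \emph{only} algebraic vertices --- is exactly the step you leave as the ``amalgamation lemma,'' and as stated it is a genuine gap, not a routine verification. The hypothesis at a non-algebraic $\mathbf{v}$ is purely local: $[\tcone(P,\mathbf{v})]$ is some linear combination of indicators of line-cones, with no control over which line-cones appear, their directions, or their coefficients, and no relation to the faces of $\mathcal{A}(\mathcal{H})$. Turning this into a \emph{global} rewriting of the star $\sum_{i:\,\mathbf{v}\in T_i}\alpha_i[T_i]$ as a signed combination of simplices avoiding $\mathbf{v}$ is a statement of essentially the same depth as the theorem itself: the local identity lives on unbounded cones, whereas the star is bounded, so any local cancellation must be compensated far from $\mathbf{v}$ by simplices whose vertices you have not exhibited; the claim that the needed vertices stay inside $V$ is asserted, not argued (Sch\"onhardt-type examples are precisely warnings that re-triangulating a region while forbidding a point is delicate, and ``signed'' does not make it automatic --- it makes it the theorem). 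Moreover, after the first deletion the new simplices are no longer unions of cells of $\mathcal{A}(\mathcal{H})$, so the iteration ``over all of $V\setminus\mathcal{V}_P$'' loses the very structure you invoke to control new vertices. The appeal to the Frettl\"oh--Glazyrin theorem does not help here: that result says a pointed convex cone is not a sum of line-cone indicators (it identifies which points \emph{are} algebraic vertices), and it gives no mechanism for cancelling the star of a point that is \emph{not} one.

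For comparison, the paper avoids any local surgery. It proves the statement for all compactly supported $f\in\mathcal{P}^d$ by induction on the dimension: the signed sections $f_h$ (the ``facets'') lie in $\mathcal{P}^{d-1}$, their algebraic vertices are algebraic vertices of $f$ (Lemma~\ref{lem:vertex of a facet is a vertex of polytope}), hence a nonzero $f$ has at least one algebraic vertex $\mathbf{p}$ (Corollary~\ref{cor:fourier-vertex exists in polytopes}); then $f=\sum_h Pyr_h$, where each pyramid function is the cone from $\mathbf{p}$ over a facet, and applying the induction hypothesis to each $f_h$ and coning the resulting simplices from $\mathbf{p}$ gives the representation with all simplex vertices in $\mathcal{V}_f$ and with integer coefficients. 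If you want to salvage your route, you would need to actually prove the vertex-deletion lemma (and keep track of where compensating vertices live); the paper's facet-and-pyramid induction is the device that makes this unnecessary.
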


It seems that weaker versions of this theorem were known for a long time, and in particular A.~Gaifullin \cite{gaifullin2014generalization} showed that the indicator function of a polytope $P$ is a linear combination of indicator functions of simplices whose vertices belong to the set of geometric vertices of the polytope $P$.

Algebraic vertices of $P$ are closely related to the structure of integral transforms of $[P]$, in particular the Fourier--Laplace transform of $[P]$ (see below), and the  Fantappi\`e transform $\mathcal{F}_P(z)$  \cite{gravin2012moments}.
In \cite{gravin2012moments} the authors prove  an analogue of Theorem~\ref{thm:sum of simplices}, 
where the role of algebraic vertices is played by linear functions $\langle z,v\rangle$, 
appearing in the denominator $\prod_{v\in V_{\mathcal{F}}}(1-\langle z,v\rangle)$ of $\mathcal{F}_P(z)$.
They needed, however, an extra genericity assumption on $V_{\mathcal{F}}$, and were only able to 
establish a decomposition like the one in \eqref{eq:polytope sum of simplices} with $\alpha_i\in\mathbf{R}$.
They stated a conjecture \cite[Conjecture~7]{gravin2012moments}, that claims the genericity is not needed; 
this conjecture is essentially settled in the affimative by our Theorem~\ref{thm:sum of simplices},
see Remark~\ref{rem:fanappie vertices} for further discussion.

We were informed by Dmitrii Pasechnik [private communication, 2014] that
\cite[Conjecture~7]{gravin2012moments} follows from results in \cite{brion1999arrangement}.

Generalized (non-bounded) polyhedra also can be described through their algebraic vertices with the following theorem.
\begin{theorem}
	\label{thm:sum of line-cones and simplicial cones}
	Let $\mathcal{V}_P$ be the set of algebraic vertices of a generalized polyhedron $P \subset \mathbf R^d$.
Then
\[
[P]= \sum_{i=1}^k \alpha_i [D_i] +  \sum_{\mathbf{v} \in \mathcal{V}_P} [\tcone(P,\mathbf{v})],
\]
for some integers $\alpha_i$ and line-cones $D_i$, $i=1, \dots, k$. The equality holds almost everywhere, except perhaps on a set of measure zero.

Moreover, if $[P]$ is represented (up to a set of measure zero) as a linear combination of indicator functions of line-cones and simplicial cones, then the set of apices of these simplicial cones should contain~$\mathcal{V}_P$.
\end{theorem}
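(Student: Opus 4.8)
The plan is to prove the two assertions separately, reducing the first (existence of the decomposition) to Theorem~\ref{thm:sum of simplices} by a compactification-and-truncation argument, and then deriving the minimality statement from the definition of algebraic vertex by a purely local computation.

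For the existence part, I would first decompose $[P]$ using the inclusion–exclusion structure of $P$ as a finite union of convex polyhedra, so it suffices to treat $P$ itself as a single convex polyhedron (or, after re-expanding, to handle the general generalized polyhedron termwise). Pick a large ball $B$ containing all the algebraic vertices of $P$ in its interior, so that inside $B$ the polyhedron $P$ looks locally exactly like its tangent cones at those vertices. The idea is then to split $[P] = [P\cap B] + [P\setminus B]$ and observe that $[P\cap B]$ is (up to measure zero) the indicator of a generalized polytope whose algebraic vertices are precisely $\mathcal V_P$ together with possibly new vertices created on $\partial B$; Theorem~\ref{thm:sum of simplices} then expresses it as an integer combination of simplices, and each simplex is in turn an integer combination of simplicial cones (a bounded simplex equals, up to measure zero, an alternating sum of the tangent cones at its vertices — this is the standard Brianchon–Gram relation). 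Grouping these cone contributions at each $\mathbf v\in\mathcal V_P$ must reproduce $[\tcone(P,\mathbf v)]$, because near $\mathbf v$ the left side is $[P]$ and all other terms are locally constant; the leftover terms, supported near $\partial B$ or at infinity, must then combine to a function whose every tangent cone is a line-cone, and a function of bounded ``non-line-cone'' type supported away from $\mathcal V_P$ can be rewritten as an integer combination of line-cones $D_i$ by the same cone-decomposition applied to $[P\setminus B]$. Making this bookkeeping precise — in particular checking that the coefficient of $[\tcone(P,\mathbf v)]$ is exactly $1$ and that everything else is genuinely a line-cone combination — is the technical heart of this half.

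For the minimality part, suppose $[P] = \sum_j \beta_j[C_j] + \sum_i \gamma_i[D_i]$ almost everywhere, where the $C_j$ are simplicial cones with apices $\mathbf a_j$ and the $D_i$ are line-cones. Fix an algebraic vertex $\mathbf v$ and pass to tangent cones at $\mathbf v$: taking $\tcone(\cdot,\mathbf v)$ is linear on indicator functions (up to measure zero) and sends $[D_i]$ to a line-cone again, sends $[C_j]$ to $[\mathbf R^d]$ or a line-cone whenever $\mathbf a_j\neq\mathbf v$ (since from $\mathbf v$ one either sees all of $C_j$ or, if $\mathbf v\notin C_j$, nothing — and in the boundary cases one still gets a line-cone or the zero function), and sends $[C_j]$ with $\mathbf a_j=\mathbf v$ to itself. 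Hence $[\tcone(P,\mathbf v)]$ equals a line-cone combination plus $\sum_{j:\mathbf a_j=\mathbf v}\beta_j[C_j]$. If no $\mathbf a_j$ equalled $\mathbf v$, this would exhibit $[\tcone(P,\mathbf v)]$ as a linear combination of indicator functions of line-cones, contradicting the defining property of an algebraic vertex in Definition~\ref{def:indicator definition of Fourier vertex through cones}. Therefore some apex must coincide with $\mathbf v$, and since $\mathbf v\in\mathcal V_P$ was arbitrary, the set of apices contains $\mathcal V_P$.

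The step I expect to be the main obstacle is the existence half, specifically verifying that the ``remainder'' produced after peeling off the simplicial cones at the algebraic vertices really is an integer combination of line-cones and contains no hidden non-line-cone contribution. One must argue that this remainder has the property that \emph{every} one of its tangent cones is a line-cone — equivalently, it has no algebraic vertices — and then invoke a structural fact that such a generalized-polyhedral function is an integer combination of line-cone indicators; the cleanest route is probably to obtain the remainder directly as $[P\setminus B]$ (which has no bounded features and whose tangent cones at the artificial boundary vertices on $\partial B$ can be cancelled in pairs against matching cones from the $[P\cap B]$ decomposition), rather than as an abstract leftover, so that its line-cone decomposition is manifest from a recession-cone analysis of the unbounded faces of $P$.
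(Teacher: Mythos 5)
Your minimality argument is sound and is essentially the paper's own: you localize at an algebraic vertex $\mathbf{v}$ by applying $\tcone(\cdot,\mathbf{v})$ to the assumed representation, note that $\tcone([D_i],\mathbf{v})$ is again a line-cone and that $\tcone([C_j],\mathbf{v})$ is empty, all of $\mathbf{R}^d$, or a line-cone whenever $\mathbf{v}$ is not the apex of $C_j$ (any non-apex point of a polyhedral cone lies in the relative interior of a face of dimension $\geq 1$), and contradict Definition~\ref{def:indicator definition of Fourier vertex through cones}. No complaints about that half.

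The existence half has a genuine gap, and it sits exactly where you flag it. A small problem first: $P\cap B$ with a round ball $B$ is not a generalized polytope (the curved boundary changes the set on a region of positive measure), so Theorem~\ref{thm:sum of simplices} does not apply to it; you would need to truncate with a large cube or simplex. More seriously, your treatment of the remainder fails as stated: $[P\setminus B]$ is \emph{not} manifestly a combination of line-cones obtainable from a recession-cone analysis, because it has genuine pointed corners along $\partial B$ where $P$ meets the truncating boundary, so on its own it has algebraic vertices there; and the promised ``cancellation in pairs'' against the cones produced at the artificial vertices of $P\cap B$ is never justified. What closes this gap is the one observation your sketch never makes, and which is the crux of the paper's proof: group all cone terms by apex; a linear combination of indicators of cones with common apex $\mathbf{u}$ is determined almost everywhere by its values near $\mathbf{u}$, so the group at $\mathbf{u}$ agrees, up to line-cones, with $\tcone(\cdot,\mathbf{u})$ of the function being decomposed; and at every point $\mathbf{u}\notin\mathcal{V}_P$ the tangent cone of $[P]$ is, \emph{by the very definition} of algebraic vertex, a linear combination of indicators of line-cones. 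Once you use this, the detour through Theorem~\ref{thm:sum of simplices} and the truncation becomes unnecessary: the paper instead draws the hyperplanes through all facets of a convex partition of $P$, writes $[P]=\sum_i\gamma_i[P_i]$ over the (possibly unbounded) cells of the arrangement, applies the Brianchon--Gram relation to each cell (its tangent cones at faces of dimension $\geq 1$ are line-cones), groups the vertex cones by vertex so that the group at $\mathbf{v}$ is $\tcone([P],\mathbf{v})$, and discards the groups at non-algebraic vertices as line-cone combinations. Your route could probably be completed along the same lines, but as written the ``technical heart'' is missing rather than merely tedious.
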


It is well known that the indicator function of a line-cone has vanishing Fourier--Laplace transform (see the definition in section~\ref{sec:fourier transform}).
Therefore if $\mathbf{v}$ is not an algebraic vertex, then  the Fourier--Laplace transform of the indicator function of its tangent cone also vanishes, because it is a finite linear combination of indicator functions of line-cones.

We show that the opposite also holds.  Lemma~\ref{lem:polyconical zero-f is a zum of line-cones} implies that the indicator function of the tangent cone at an algebraic vertex has non-vanishing Fourier--Laplace transform.
We formulate this fact in a more general form.
\begin{theorem} \label{thm:f-zero cone is sum of line-cones}
	If $P$ is a generalized polyhedron with zero Fourier--Laplace transform, then it does not have algebraic vertices.
\end{theorem}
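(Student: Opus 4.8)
The plan is to deduce the result from Lemma~\ref{lem:polyconical zero-f is a zum of line-cones} by showing that the Fourier--Laplace transform of $[\tcone(P,\mathbf v)]$ vanishes at \emph{every} algebraic vertex $\mathbf v$ as soon as that of $[P]$ does. Suppose, towards a contradiction, that $\mathcal{V}_P\neq\varnothing$. First I would apply Theorem~\ref{thm:sum of line-cones and simplicial cones} to $P$, which yields
\begin{equation*}
	[P]=\sum_{i=1}^{k}\alpha_i[D_i]+\sum_{\mathbf v\in\mathcal{V}_P}[\tcone(P,\mathbf v)]
\end{equation*}
almost everywhere, with integers $\alpha_i$ and line-cones $D_i$. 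Applying the Fourier--Laplace transform to both sides --- it is linear, it ignores null sets, it annihilates indicator functions of line-cones, and it annihilates $[P]$ by hypothesis --- gives the identity
\begin{equation*}
	\sum_{\mathbf v\in\mathcal{V}_P}\widehat{[\tcone(P,\mathbf v)]}=0 .
\end{equation*}

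Second, I would record the normal form of each summand. Since $\tcone(P,\mathbf v)$ is a generalized polyhedral cone with apex $\mathbf v$, its indicator function is, up to a null set, an integer combination of indicator functions of simplicial cones with apex $\mathbf v$ and of line-cones (triangulate the convex pieces, separating off those that contain a line, which are line-cones; or appeal once more to Theorem~\ref{thm:sum of line-cones and simplicial cones} for the cone itself). Computing transforms term by term --- line-cones contribute $0$, while a simplicial cone with apex $\mathbf v$ contributes $\pm e^{\langle\xi,\mathbf v\rangle}$ divided by a product of linear forms in $\xi$ --- shows that
\begin{equation*}
	\widehat{[\tcone(P,\mathbf v)]}(\xi)=e^{\langle\xi,\mathbf v\rangle}\,q_{\mathbf v}(\xi)
\end{equation*}
for some rational function $q_{\mathbf v}$. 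Hence the displayed identity becomes $\sum_{\mathbf v\in\mathcal{V}_P}e^{\langle\xi,\mathbf v\rangle}q_{\mathbf v}(\xi)=0$.

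The main point is the linear independence of the exponentials $\xi\mapsto e^{\langle\xi,\mathbf v\rangle}$ over the field of rational functions $\mathbf C(\xi_1,\dots,\xi_d)$, as $\mathbf v$ ranges over a finite set of \emph{distinct} points. This is classical: restrict $\xi$ to a generic affine line $\xi=\xi_0+t\mathbf u$, on which the scalars $\langle\mathbf u,\mathbf v\rangle$ are pairwise distinct, and use that the functions $t\mapsto e^{ct}$ for distinct $c$ are linearly independent over $\mathbf C(t)$. It follows that $q_{\mathbf v}\equiv 0$, i.e.\ $\widehat{[\tcone(P,\mathbf v)]}\equiv 0$, for every $\mathbf v\in\mathcal{V}_P$.

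Finally, Lemma~\ref{lem:polyconical zero-f is a zum of line-cones} applied to the cone $\tcone(P,\mathbf v)$, which now has vanishing Fourier--Laplace transform, shows that $[\tcone(P,\mathbf v)]$ is, up to a null set, a linear combination of indicator functions of line-cones; by Definition~\ref{def:indicator definition of Fourier vertex through cones} this contradicts $\mathbf v$ being an algebraic vertex of $P$. Hence $\mathcal{V}_P=\varnothing$, as claimed. I expect the only delicate points to be bookkeeping ones: ensuring that all the meromorphic (rational--exponential) identities are read on a common domain where the relevant integrals converge, and justifying the term-by-term transform computation for possibly non-pointed cones; the linear-independence step, although the conceptual crux, is entirely standard.
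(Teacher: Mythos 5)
Your argument is correct, and its skeleton coincides with the paper's: decompose $[P]$ via Theorem~\ref{thm:sum of line-cones and simplicial cones}, use that line-cones have vanishing transform, and use Lemma~\ref{lem:polyconical zero-f is a zum of line-cones} to translate between vanishing of $\widehat{[\tcone(P,\mathbf v)]}$ and $\mathbf v$ failing to be algebraic. The one place you genuinely diverge is the separation of the contributions of distinct vertices. The paper packages this as Lemma~\ref{lem:sum of polyconical functions}: a sum of translated polyconical functions with nonzero transforms and distinct apices cannot have vanishing transform; its proof picks an extreme point $\mathbf v_1$ of the convex hull of the apices and evaluates at $\mathbf z=t\mathbf u$ with $\mathbf u$ interior to the dual cone at $\mathbf v_1$, so the $\mathbf v_1$-term decays only polynomially while the others decay exponentially. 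You instead write each $\widehat{[\tcone(P,\mathbf v)]}$ explicitly as $e^{\langle \xi,\mathbf v\rangle}q_{\mathbf v}(\xi)$ with $q_{\mathbf v}$ rational and invoke linear independence of these exponentials over the rational function field, via restriction to a generic real line; this is the same dominant-exponential mechanism in classical dress, and it is sound (for generic real $\mathbf u$ the exponents $\langle\mathbf u,\mathbf v\rangle$ are pairwise distinct, and a nonzero rational $q_{\mathbf v}$ restricts to a nonzero rational function of $t$). The logical order is also flipped --- the paper concludes the total transform is nonzero, you conclude each vertex term vanishes --- but that is immaterial; one small trade-off is that the paper's Lemma~\ref{lem:sum of polyconical functions} is stated for general polyconical summands and is reused later (Remark~\ref{rem:unique representation}, Theorem~\ref{thm:section theorem}), whereas your independence argument is tailored to this theorem. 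The delicate points you flag are harmless in the paper's framework: a convex polyhedral cone with nontrivial lineality is a line-cone, so your term-by-term computation only needs simplicial cones with apex $\mathbf v$ plus line-cones, and the Fourier--Laplace transform on $\mathcal{P}^d$ (defined valuation-theoretically in Section~\ref{sec:fourier transform}) annihilates line-cones and lower-dimensional pieces, so measure-zero discrepancies do not affect the identity of meromorphic functions you use.
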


%
%
%
%

We prove these theorems in a more general form, for elements of the algebra of polyhedra instead of non-convex polyhedra.   
The article is organized in the following way. In section~\ref{sec:algebra of polyhedra} we define the algebra of polyhedra and tangent cones of generalized polyhedra.
In section~\ref{sec:proof of theorem of sum of simpices} we prove Theorems~\ref{thm:sum of simplices} and  \ref{thm:sum of line-cones and simplicial cones}.
In section~\ref{sec:fourier transform} we recall the definition of the Fourier--Laplace transform.
In section~\ref{sec:proof of the cone-like sum} we give the proof of Theorem~\ref{thm:f-zero cone is sum of line-cones}.
In the ensuing section we discuss one corollary of the proof, which states that
polytopes with zero Fourier--Laplace transform also have a zero Fourier--Laplace transform for each of their \emph{signed sections}.
Finally, in the last section, we discuss various  different ways of defining vertices of generalized polyhedra and find various relations between these definitions.

\bigskip
\section{The algebra of polyhedral indicator functions}
\label{sec:algebra of polyhedra}



We define a {\bf polyhedron} as the intersection of a finite number of half-spaces, which is a convex set. Thus, a polyhedron may be unbounded, and a bounded polyhedron is by definition a convex polytope.   We define a {\bf generalized polyhedron} as a finite union of polyhedra. Now we can speak about the union, intersection, and subtraction of polyhedra.

For each generalized polyhedron $P$ we can associate the class of sets in $\mathbf R^d$ which differ from~$P$ on a set of measure zero. Thus, we work with equivalence classes of generalized polyhedra. The above operations extend naturally to these equivalence classes.

\begin{lemma}
	\label{lem:union of simplicas}
	Each generalized polythedron $P \subset \mathbf R^d$ can be represented as a finite disjoint union of convex polyhedra.
\end{lemma}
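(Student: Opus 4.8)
The plan is to pass to the hyperplane arrangement generated by all the half-spaces occurring in some representation of $P$ as a finite union of polyhedra, and to observe that $P$ coincides, up to a set of measure zero, with the union of those full-dimensional chambers of this arrangement that happen to lie inside $P$. Concretely, write $P=\bigcup_{i=1}^n Q_i$, where each $Q_i$ is an intersection of finitely many closed half-spaces; let $H_1,\dots,H_m$ be the finitely many bounding hyperplanes of all these half-spaces, and let $\mathcal A$ be the arrangement they form. The nonempty connected components of $\mathbf R^d\setminus\bigcup_{j=1}^m H_j$ are open convex polyhedra $C_1,\dots,C_N$ (each is an intersection of open half-spaces determined by which side of each $H_j$ it lies on); they are pairwise disjoint, and their complement $\bigcup_{j=1}^m H_j$ has Lebesgue measure zero.

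The next step is to check that each chamber is either essentially inside or essentially outside $P$. Fix a chamber $C_k$. For each closed half-space $H$ among those defining the $Q_i$, the chamber $C_k$ lies entirely in the open half-space on one side of the hyperplane $\partial H$: in one case $C_k\subset H$, and in the other $C_k\cap H$ is contained in $\partial H$ and hence has measure zero. Therefore, for every $i$, either $C_k\subseteq Q_i$ (precisely when $C_k$ lies on the correct side of every hyperplane defining $Q_i$) or $C_k\cap Q_i$ has measure zero. Let $J=\{k : C_k\subseteq Q_i\text{ for some }i\}$. Then for $k\in J$ we have $C_k\subseteq P$, while for $k\notin J$ the set $C_k\cap P=\bigcup_i (C_k\cap Q_i)$ has measure zero. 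Since $\mathbf R^d\setminus\bigcup_j H_j=\bigsqcup_k C_k$, it follows that $P$ and $\bigcup_{k\in J}C_k$ differ only on a set of measure zero, so they represent the same equivalence class.

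Finally I would replace each relatively open chamber $C_k$, $k\in J$, by its closure $\overline{C_k}$, which is the intersection of the corresponding closed half-spaces and hence a convex polyhedron; since $C_k$ is open and nonempty, $\overline{C_k}\setminus C_k\subseteq\bigcup_j H_j$ has measure zero, so $\overline{C_k}$ represents the same class as $C_k$. Distinct chambers have disjoint interiors, so for $k\neq l$ the intersection $\overline{C_k}\cap\overline{C_l}$ lies in $\bigcup_j H_j$ and has measure zero. Thus $\{\,\overline{C_k} : k\in J\,\}$ is a finite family of convex polyhedra, pairwise disjoint up to measure zero, whose union equals $P$ up to measure zero — which is exactly the claim in the equivalence-class language used throughout the paper. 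The only point that needs any care is the bookkeeping with null sets, i.e. remembering that both "disjoint union" and "equals $P$" are meant modulo measure zero; beyond that there is no real obstacle.
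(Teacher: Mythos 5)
Your proof is correct and follows essentially the same route as the paper's: cut $\mathbf R^d$ by the arrangement of hyperplanes spanned by the facets of the polyhedra in a representation of $P$, and take the union of the resulting convex cells that lie in $P$. Your additional bookkeeping with null sets simply makes explicit the measure-zero conventions under which the paper works with equivalence classes of generalized polyhedra.
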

\begin{proof}
	Draw a hyperplane through each facet of each convex polyhedron in the union generating~$P$.
	These hyperplanes cut $\mathbf R^d$ into convex cells (some of which are unbounded) that are by definition convex polyhedra. Some of them are contained in $P$, and their union is exactly~$P$.
\end{proof}

\noindent
We call such a representation of $P$ into a disjoint union of convex polyhedra a {\bf convex partition of $P$}.

\begin{lemma}
	\label{lem:p setminus q}
	If $P$ and $Q$ are two generalized polyhedra then the subtraction $T=P\setminus Q$ is also a generalized polyhedron.
\end{lemma}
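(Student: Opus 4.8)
Proof proposal for Lemma \ref{lem:p setminus q} ($T = P \setminus Q$ is a generalized polyhedron).

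The plan is to reduce everything to the convex case and then invoke Lemma \ref{lem:union of simplicas}. First I would write $P = \bigcup_{i=1}^m P_i$ and $Q = \bigcup_{j=1}^n Q_j$ as finite unions of convex polyhedra. Since set subtraction distributes over unions in the first argument, $P \setminus Q = \bigcup_i (P_i \setminus Q)$, so it suffices to show each $P_i \setminus Q$ is a generalized polyhedron; and since $P_i \setminus Q = \bigcap_j (P_i \setminus Q_j)$, and generalized polyhedra are closed under finite intersection (intersecting a finite union of convex polyhedra with another such union again gives a finite union of convex polyhedra, as each pairwise intersection of convex polyhedra is a convex polyhedron), it is enough to treat the case where both $P$ and $Q$ are convex polyhedra.

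So suppose $P, Q$ are convex polyhedra, with $Q = \bigcap_{k=1}^r H_k^-$ an intersection of finitely many closed half-spaces $H_k^-$, with complementary open half-spaces $H_k^+$. Then
\[
P \setminus Q = P \cap \Bigl( \bigcup_{k=1}^r H_k^+ \Bigr) = \bigcup_{k=1}^r \bigl( P \cap H_k^+ \bigr).
\]
Each $P \cap H_k^+$ is the intersection of a convex polyhedron with an open half-space. Working modulo sets of measure zero — which is exactly the setting the paper has set up, since we work with equivalence classes — we may replace the open half-space $H_k^+$ by its closure, so that $P \cap H_k^+$ coincides almost everywhere with the convex polyhedron $P \cap \overline{H_k^+}$. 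Hence $P \setminus Q$ is, up to measure zero, a finite union of convex polyhedra, i.e.\ a generalized polyhedron.

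The only genuinely delicate point is the treatment of boundaries: literal set subtraction produces neither-open-nor-closed pieces, so the statement is really a statement about the equivalence classes introduced just before the lemma, and one should be slightly careful that discarding the measure-zero boundary slabs is harmless because finitely many hyperplanes have measure zero. Alternatively, if one wants an honest set-theoretic identity rather than an almost-everywhere one, one can first apply Lemma \ref{lem:union of simplicas} to get a convex partition of $P \cup Q$ into finitely many disjoint convex cells, observe that each cell lies either entirely in $Q$ or entirely outside $Q$ up to its own boundary, and take the union of the cells of the second type; I expect the measure-zero formulation above to be the cleaner route and the one intended here.
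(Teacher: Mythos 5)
Your proof is correct, but it follows a different route from the paper. The paper reuses the argument of Lemma~\ref{lem:union of simplicas} verbatim: it draws a hyperplane through each facet of each convex piece in convex partitions of $P$ and of $Q$, so that the resulting arrangement cuts $\mathbf R^d$ into convex cells, and then observes that $T$ is (up to measure zero, since one works with equivalence classes) the union of those cells contained in $T$; this buys uniformity with the preceding lemma and immediately yields a \emph{disjoint} convex decomposition, i.e.\ a convex partition of $T$. You instead reduce by Boolean algebra --- distributing $\setminus$ over the union defining $P$, using De Morgan over the union defining $Q$, and using closure of generalized polyhedra under finite intersection --- to the case of convex $P$ and $Q$, and then write $P\setminus Q=\bigcup_k (P\cap H_k^{+})$ with the complementary open half-spaces of the half-spaces cutting out $Q$, passing to closures modulo measure zero. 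This is a perfectly valid argument in the paper's equivalence-class setting, and it has the merit of making explicit exactly where the measure-zero discrepancies arise (boundary pieces on finitely many hyperplanes), a point the paper's one-line proof glosses over; its mild cost is that it produces an overlapping union rather than a partition, though your closing remark about refining via the cell decomposition (which is essentially the paper's proof) recovers that if needed.
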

\begin{proof} This argument is almost identical with the previous proof.
	Draw a hyperplane through each facet of each convex polyhedron of the convex partitions of $P$ and $Q$.
	These hyperplanes cut $\mathbf R^d$ into convex cells (some of which are unbounded). Some of these cells are contained in~$T$ and their union is $T$.
\end{proof}

There is a natural algebra structure on the equivalence classes of indicator functions of polyhedra, as follows.

Let $\mathcal{P}^d$ be the vector space, under addition of functions, of real linear combinations of indicator functions of all convex polyhedra in $\mathbf R^d$. We may also define multiplication as the pointwise multiplication of functions. These two operations allow us to consider $\mathcal{P}^d$ as
 an algebra, which we call the {\bf algebra of polyhedra}.

 Again, we are working with equivalent classes of functions. So two functions are equivalent if they differ on a set of measure zero. In fact, $\mathcal{P}^d$ is the algebra of equivalence classes of polyhedra. 
Furthermore, if $P$ and $Q$ are two generalized polyhedra then:
\begin{gather*}
	[P \cap Q] = [P]\cdot [Q],\\
	[P \cup Q] = [P]+[Q]-[P]\cdot [Q].
\end{gather*}

From the previous two lemmas it follows that for any function $f$ belonging to the algebra~$\mathcal{P}^d$, the level set $\{\mathbf{x} \in \mathbf R^d \mid f(\mathbf{x}) = {\text constant} \}$ is a generalized polyhedron, because it can be represented as union and subtraction of convex polyhedra in $\mathbf R^d$.
So we can represent $f$ as a sum $\sum \alpha_i [P_i]$, where the $P_i$ are \emph{disjoint} generalized polyhedra, and $\alpha_i \in \mathbf{R}$.
We call this sum a {\bf basic decomposition} of $f$.

If $P$ is a convex polyhedron and $\mathbf{v}$ is a point then we define the  tangent cone of $P$ at $\mathbf{v}$ by
\begin{equation*}
	\begin{array}{l}
				\tcone(P, \mathbf{v})=\{\mathbf{v}+\mathbf{x}: \mathbf{v}+\varepsilon \mathbf{x} \in P \text{ for some } \epsilon>0\}, \text{ if } \mathbf{v} \in P, \\
				\tcone(P, \mathbf{v})=\emptyset, \text{ if } \mathbf{v} \notin P.
	\end{array}
\end{equation*}

If the generalized polyhedron $P$ is a union of polyhedra $P_1$, $\dots$, $P_n$ then we define $\tcone(P, \mathbf{v})$ by their union $\cup_{i=1}^n \tcone(P_i, \mathbf{v})$.

Let $\sum \alpha_i [P_i]$ be a basic decomposition of $f$, and let
 $C_i$ be the tangent cone at a point $\mathbf{v}$ belonging to the polyhedron $P_i$.
We define the \textbf{tangent cone} of the function $f$, at the point~$\mathbf{v}$, by
 $\tcone(f,\mathbf{v}) := \sum \alpha_i [C_i]$.

As an aside, an alternate way to define the tangent cone to $f$ is to consider a sufficiently small neighborhood $U_{\mathbf{v}}$ of ${\mathbf{v}}$ and for each point $\mathbf{x} \in U_{\mathbf{v}}$ define $\tcone(f,\mathbf{v})(\mathbf{y})=f(\mathbf{x})$ for all
points $\mathbf{y}$ on the ray $[{\mathbf{v}}, \mathbf{x})$.

We are now ready to define the algebraic vertices of a function $f \in \mathcal{P}^d$. We say that $\mathbf{v}$ is an \textbf{algebraic vertex} of $f$ if $\tcone(f,\mathbf{v})$ cannot be represented as $\sum \alpha_i [D_i]$, where the $D_i$ are line-cones (cones that can be represented as a union of parallel lines) and $\alpha_i \in \mathbf{R}$.

We say that $f$ is {\bf polyconical function} if it can be represented as a linear combination of indicator functions of simplicial cones $C_i$ with apex at the origin:
	\begin{equation*}
		f=\sum_{i=1}^n \alpha_i [C_i].
	\end{equation*}

The collection of {\bf polyconical functions} forms a very natural subalgebra of $\mathcal{P}^d$. 
We will also need the following definition.

\begin{definition}
\label{def:section by hyperplane}
		Let $f$ be a function on $\mathbf{R}^d$, and let $h$ be an oriented hyperplane, which means that there is a normal vector $\mathbf{n}_h$, whose direction depends on the orientation of $h$.
		Suppose that for almost all (up to measure zero) $\mathbf{x} \in h$ the limits $f_h^+(\mathbf{x})=\lim\limits_{\varepsilon \rightarrow 0} f(\mathbf{x} + \varepsilon \mathbf{n}_h)$ and $f_h^-(\mathbf{x})=\lim\limits_{\varepsilon \rightarrow 0} f(\mathbf{x}-\varepsilon \mathbf{n}_h)$  exist.
		Then the function $f_h(\mathbf{x}):=f_h^+(\mathbf{x})-f_h^-(\mathbf{x})$ is called the \bf{signed section of the function $f$ by the signed hyperplane~$h$}.
\end{definition}

It is clear that for $f\in \mathcal{P}^d$, both $f_h^+(\mathbf{x})$ and $f_h^-(\mathbf{x})$ belong to $\mathcal{P}^{d-1}$ in the $(d-1)$-dimensional hyperplane $h$.
Therefore the signed section of $f\in \mathcal{P}^d$ is also an element of the algebra of polyhedra $\mathcal{P}^{d-1}$ defined on $h$.

In what follows we prove Theorems~\ref{thm:sum of simplices}, \ref{thm:sum of line-cones and simplicial cones}, \ref{thm:f-zero cone is sum of line-cones} not only for generalized polyhedra and polytopes, but also for elements of $\mathcal{P}^d$.



%

\section{Proofs of Theorems~\ref{thm:sum of simplices} and \ref{thm:sum of line-cones and simplicial cones}}
\label{sec:proof of theorem of sum of simpices}
Here we prove the statement of the Theorems for functions with bounded support from the algebra $\mathcal{P}^d$.
\begin{reptheorem}{thm:sum of simplices}
	Assume $f \in \mathcal{P}^d$ has bounded support and let $\mathcal{V}_f$ be the set of algebraic vertices of $f$. Let $\mathcal{T}_f$ be the set of simplices whose vertices lie in $\mathcal{V}_f$.
	Then for suitable real $\alpha_i$,
	\begin{equation*}
		\label{eq:polytope sum of simplices gen}
		f=\sum_{T_i \in \mathcal{T}_P} \alpha_i [T_i].
	\end{equation*}
	
	Moreover, if $f$ is represented as a linear combination of indicator functions of simplices, then the set of vertices of these simplices must contain $\mathcal{V}_f$.
	
	If the coefficients of $f$ in its basic decomposition are all integers, then  the $\alpha_i$ are also  integers.
\end{reptheorem}
	
	\bigskip
Consider any oriented hyperplane $h$ and the signed section $f_h$ of $f$ by $h$.
Usually $f_h$ is zero, but sometimes it is not.
If $P$ is convex polytope then $[P]_h$ is non-zero if and only if $h$ contains a facet of $P$.
Using this observation we can define a facet of $f$ in the following way.
\begin{definition}
	\label{def:facet of polytope}
	If $f \in \mathcal{P}^d$,  and the signed section $f_h$ is non-zero for an oriented hyperplane $h$, then the function $f_h$ is called a \textbf{facet} of $f$.
\end{definition}

Note that if $f_h$ is a facet of $f$ with the same orientation as $h$, then it is also a facet with the opposite orientation of $h$.
It is clear that $f_h$ is a function belonging to $\mathcal{P}^{d-1}$, on the hyperplane~$h$.
We will use the following Lemma.

\begin{lemma}
	\label{lem:vertex of a facet is a vertex of polytope}
	An algebraic vertex of any facet of the function $f$ is an algebraic vertex of~$f$.
\end{lemma}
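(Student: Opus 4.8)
The plan is to relate the tangent cone of a facet $f_h$ at a point $\mathbf{v}\in h$ to the tangent cone of $f$ itself at $\mathbf{v}$, via the operation of taking signed sections. The key observation is that the signed section commutes with passing to tangent cones in the following sense: if we fix $\mathbf{v}\in h$ and translate so that $\mathbf{v}$ is the origin, then $\tcone(f_h,\mathbf{v})$, viewed as a function on the hyperplane $h$, should coincide with the signed section of $\tcone(f,\mathbf{v})$ by the hyperplane $h$ (now through the origin, parallel to the original $h$). This is because both $f_h^+$ and $f_h^-$ are defined by one-sided limits along $\mathbf{n}_h$, and taking the tangent cone at $\mathbf{v}$ only records the behaviour of $f$ in an arbitrarily small neighbourhood of $\mathbf{v}$; the limit defining $f_h^{\pm}$ and the radial extension defining the tangent cone can be interchanged. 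So first I would prove the identity
\[
\tcone(f_h,\mathbf{v}) = \bigl(\tcone(f,\mathbf{v})\bigr)_h,
\]
working with a basic decomposition of $f$ and checking it termwise for a single convex polyhedron $P$: $\tcone(P,\mathbf{v})$ is a convex cone, its section by $h$ is the tangent cone within $h$ of the polyhedron $P\cap h$ at $\mathbf{v}$, and both sides track the same local data.

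Granting this identity, the lemma reduces to a statement purely about polyconical functions: \emph{if $g=\tcone(f,\mathbf{v})$ is a sum of indicator functions of line-cones, then so is its signed section $g_h$.} This is the crux. A line-cone $D$ is a union of parallel lines, say with common direction $\mathbf{u}$. If $\mathbf{u}$ is not parallel to $h$, then the section of $D$ by $h$ is again a line-cone in $h$ (one dimension down) — indeed $D$ is the preimage of some cone $D'\subset h$ under projection along $\mathbf{u}$, and its signed section by $h$ is, up to sign, just $[D']$ extended; actually the signed section of such a $D$ is zero, because $D$ locally looks the same on both sides of $h$ in the $\mathbf{n}_h$-direction. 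If $\mathbf{u}$ \emph{is} parallel to $h$, then $D\cap h$ is a line-cone inside $h$ with direction $\mathbf{u}$, and again the two-sided limits agree so the signed section vanishes. Either way each $[D_i]_h$ is either zero or the indicator of a line-cone in $h$, hence $g_h$ is a linear combination of line-cone indicators in $h$. Therefore, if $\mathbf{v}$ were not an algebraic vertex of $f$, its tangent cone would be a combination of line-cones, so the tangent cone of $f_h$ at $\mathbf{v}$ would be too, contradicting the assumption that $\mathbf{v}$ is an algebraic vertex of the facet $f_h$.

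I expect the main obstacle to be making the commutation identity $\tcone(f_h,\mathbf{v})=(\tcone(f,\mathbf{v}))_h$ precise at the level of equivalence classes modulo measure-zero sets, and in particular handling the points of $h$ where the one-sided limits $f_h^{\pm}$ fail to exist or where the basic decomposition of $f$ is delicate — these form a lower-dimensional, hence measure-zero, set within $h$, but one must check that discarding them does not affect the tangent cone at $\mathbf{v}$. A clean way to sidestep some of this is to use the alternate description of the tangent cone given in the excerpt (radially extending the values of $f$ on a small punctured neighbourhood of $\mathbf{v}$): with that description the interchange of the radial limit with the transverse limit defining $f_h^{\pm}$ becomes essentially immediate for each cell of a convex partition, and then one sums over cells.
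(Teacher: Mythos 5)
Your proposal is correct and follows essentially the same route as the paper's proof: argue the contrapositive, use that taking signed sections is linear and sends indicators of line-cones to (combinations of) indicators of line-cones in $h$, and use the identity $\tcone(f_h,\mathbf{v})=\bigl(\tcone(f,\mathbf{v})\bigr)_h$, which you rightly single out as the point needing care and which the paper uses implicitly. One small slip: in the case where the line direction $\mathbf{u}$ is parallel to $h$ the signed section need not vanish (e.g.\ a half-space bounded by $h$ is a line-cone whose signed section is $[h]$ itself); it is, however, always a difference of indicators of two line-cones in $h$, so your conclusion and the rest of the argument are unaffected.
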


This result follows directly from \textbf{Lemma~\ref{lem:section of polyconical functions}} in Section~\ref{sec:signed section}.
We offer a more direct proof here, though, of Lemma~\ref{lem:vertex of a facet is a vertex of polytope} using Definition~\ref{def:indicator definition of Fourier vertex through cones} only.

\begin{proof}
	We show that if a point $\mathbf{v}$ is not an algebraic vertex of $f$ then it is not an algebraic vertex of any signed plane $h$ containing it.
	Hence we suppose that $\tcone(f,\mathbf{v}) = \sum \alpha_i [D_i]$, where $D_i$ are line-cones.
	Note that $[D_i]_h$ are also indicator functions of some line-cone in $h$.
	Since taking signed section is a linear map from $\mathcal{P}^d$ to $\mathcal{P}^{d-1}$, it follows that $\tcone(f_h,\mathbf{v}) = \sum \alpha_i [D_i]_h$.
	Therefore $\mathbf{v}$ is not an algebraic vertex of $f_h$.
\end{proof}

Lemma \ref{lem:vertex of a facet is a vertex of polytope} implies, by an easy induction, the following corollary.
\begin{corollary}
	\label{cor:fourier-vertex exists in polytopes}
	Any non-zero $f \in \mathcal{P}^d$ with bounded support has an algebraic vertex.
\end{corollary}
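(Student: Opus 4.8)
The plan is to argue by induction on the dimension $d$, using Lemma~\ref{lem:vertex of a facet is a vertex of polytope} as the inductive engine. In dimension $d=0$ a non-zero bounded-support element of $\mathcal{P}^0$ is a non-zero multiple of the indicator of a point, and that point is trivially an algebraic vertex. For the inductive step, suppose the statement holds in dimension $d-1$ and let $f\in\mathcal{P}^d$ be non-zero with bounded support. The key claim I would establish is that such an $f$ must have at least one facet in the sense of Definition~\ref{def:facet of polytope}; that is, there is some oriented hyperplane $h$ for which the signed section $f_h$ is a non-zero element of $\mathcal{P}^{d-1}$. Granting this, $f_h$ is a non-zero bounded-support element of the $(d-1)$-dimensional algebra of polyhedra on $h$, so by the induction hypothesis it has an algebraic vertex $\mathbf{v}$; Lemma~\ref{lem:vertex of a facet is a vertex of polytope} then says $\mathbf{v}$ is an algebraic vertex of $f$, completing the induction.

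So the content of the proof is the existence of a facet for any non-zero bounded-support $f$. I would prove this by contradiction: assume every signed section $f_h$ is zero. Take a basic decomposition $f=\sum\alpha_i[P_i]$ with the $P_i$ disjoint generalized polyhedra, and refine it (using Lemma~\ref{lem:union of simplicas}) to a convex partition so that $f=\sum\beta_j[Q_j]$ with the $Q_j$ disjoint convex polyhedra whose defining hyperplanes form a finite arrangement $\mathcal{H}$. Since $f$ has bounded support, all the cells $Q_j$ on which $f$ is non-zero are bounded. Now walk along a generic line and watch the value of $f$: as the line crosses one of the hyperplanes in $\mathcal{H}$, the jump in $f$ across that hyperplane is precisely (a value of) the signed section $f_h$ at that crossing point. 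If all signed sections vanish, then $f$ is constant along every generic line, hence constant on $\mathbf{R}^d$ up to measure zero; bounded support then forces that constant to be $0$, contradicting $f\neq 0$. One has to be slightly careful that "signed section is identically zero" is the right notion of vanishing here — it means zero up to measure zero on $h$ — but the arrangement $\mathcal{H}$ is finite, so a generic line meets each hyperplane at a single point lying outside the measure-zero exceptional set, and the argument goes through.

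The main obstacle I anticipate is making the "constant along generic lines implies globally constant" step rigorous in the measure-theoretic setting of $\mathcal{P}^d$, i.e.\ handling the fact that equalities only hold almost everywhere. I would deal with this by exploiting the piecewise-constant structure: $f$ is constant on the interior of each full-dimensional cell of the arrangement $\mathcal{H}$, and two adjacent full-dimensional cells share a facet supported on some $h\in\mathcal{H}$, across which the difference of the two constant values equals a value of $f_h$; if every $f_h=0$ a.e.\ then, since each $f_h$ is itself piecewise constant on $h$, it is in fact identically zero on the relative interiors of the $(d-1)$-cells, so adjacent cell-constants agree, and connectivity of the union of full-dimensional cells (through their common facets) forces a single global constant. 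Alternatively, one can simply invoke Corollary~\ref{cor:fourier-vertex exists in polytopes}'s intended route — that a non-zero $f$ with bounded support cannot be constant, since its only candidate constant is $0$ — and reduce everything to the combinatorics of the finite arrangement, which is where I expect the only real care to be needed.
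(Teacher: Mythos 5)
Your proposal is correct and follows essentially the same route as the paper, which proves the corollary precisely by the ``easy induction'' on dimension via Lemma~\ref{lem:vertex of a facet is a vertex of polytope}. The paper leaves the facet-existence step (a non-zero bounded-support $f$ has a non-zero signed section) implicit, and your piecewise-constant argument on the cells of the defining hyperplane arrangement fills that gap correctly.
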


\begin{proof}[Proof of Theorem~\ref{thm:sum of simplices}']
The proof is by induction on the dimension. For the $1$-dimensional space the statement is trivial.

Let's fix a point $\mathbf{p}$ in $\mathbf R^d$.
For each facet $f_h$ of $f$ choose the orientation of $h$ so that $p$ lies in the positive halfspace of $h$.
We can associate a signed ``pyramid function'' $Pyr_h$ as follows:
if $\mathbf{x} \in h$ and $\mathbf{y}$ is a point on the interval $(\mathbf{p}, \mathbf{x})$ define $Pyr_h(\mathbf{y})=f_h(\mathbf{x})$ and zero otherwise.
(So, if $f_h$ is an indicator function of a two-dimensional polygon in $\mathbf{R}^3$ then $Pyr_h$ will be the indicator function of the pyramid whose apex is $\mathbf{p}$ and whose base is the polygon.)

It is clear that
\begin{equation}
	\label{eq:polytope as a sum of pyramid}
	f=\sum_{h} Pyr_h,
\end{equation}
where the sum is taken over all facets of $f$.
Indeed for each point $\mathbf{y}$ consider the tail $[\mathbf{y}, \infty)$ of the ray $[\mathbf{p}, \mathbf{y})$, intersecting hyperplanes the $h_1$, $h_2$, \dots, $h_k$ that are supporting facets of $f$.
As their orientation was chosen properly, $y$ lies on the positive side of each $h_i$, and $f$ equals $0$ at points that are
far away on the ray, we see that $f(\mathbf{y})$ equals the sum of $f_{h_i}$.

By the induction hypothesis each facet $f_h$ is a linear combination of indicator functions of simplices:
	\begin{equation*}
		f_h=\sum \beta_{h,i} [T_{h,i}],
	\end{equation*}
	where $T_{h,i}$ are simplices whose vertices are algebraic vertices of $f_h$ (which are algebraic vertices of $f$ by Lemma~\ref{lem:vertex of a facet is a vertex of polytope}).
Note that
	\begin{equation}
		\label{eq:representation of pyramid}
		Pyr_h=\sum \beta_{h,i} [\simplex(\mathbf{p}, T_{h,i})],
	\end{equation}
	where the $\simplex(\mathbf{p}, T_{h,i})$ is a simplex with base $T_{h,i}$ and apex $\mathbf{p}$, and  $\mathbf{p}$ is an arbitrary point of $\mathbf{R}^d$.

	Now we let $\mathbf{p}$ be any algebraic vertex of $f$, which exists by Corollary~\ref{cor:fourier-vertex exists in polytopes}. We conclude that the vertices of all simplices $Pyr(\mathbf{p}, T_{h,i})$ are algebraic vertices of $f$.

	Therefore combining equalities \eqref{eq:polytope as a sum of pyramid} and \eqref{eq:representation of pyramid} we obtain the required representation:

	\begin{equation*}
		\label{eq:polytope as a sum of certain pyramid}
		f=\sum_{h} \sum \beta_{h,i} [\simplex(\mathbf{p}, T_{h,i})].
	\end{equation*}
	
	It is clear that if the coefficients in the basic decomposition of $f$ are integers, then the coefficients of the basic decomposition of its signed sections are also integers.
	Therefore in the representation of $f$ which we obtain by this induction the coefficients are integers.

	Finally we show that if $f=\sum_1^m \alpha_i [T_i]$, where each $T_i$ is a simplex, then the set of vertices of $T_i$ contains $\mathcal{V}_f$.
	Suppose $\mathbf{v} \in \mathcal{V}_f$ is not a vertex of any of the simplices $T_i$.
	Then the tangent cone $\tcone([T_i], \mathbf{v})$ is a line-cone for every $i$.
	Since $\tcone (f,\mathbf{v})=\sum_i \alpha_i \tcone([T_i],\mathbf{v})$, the tangent cone of  $f$ at $\mathbf{v}$ can be represented as a sum of line-cones. This contradicts the fact that $\mathbf{v}$ is an algebraic vertex of $f$.
\end{proof}

\begin{reptheorem}{thm:sum of line-cones and simplicial cones}
Let $\mathcal{V}_f$ be the set of algebraic vertices of $f \in \mathcal{P}^d$.
Then
\[
f= \sum_{i=1}^k \alpha_i [D_i] +  \sum_{\mathbf{v} \in \mathcal{V}_f} \tcone(f,\mathbf{v}),
\]
for some reals numbers $\alpha_i$ and line-cones $D_i$, $i=1, \dots, k$.

Moreover, if $f$ is represented as a linear combination of indicator functions of line-cones and simplicial cones, then the set of apices of these simplicial cones should contain~$\mathcal{V}_f$.

	If the coefficients of $f$ in its basic decomposition are all integers, then  $\alpha_i$ and $\beta_i$ are also integers.
\end{reptheorem}

\begin{proof}
	Draw a hyperplane through each facet of each polyhedron in the basic decomposition of~$f$.
	These hyperplanes separate the space into convex polyhedra $P_i$, which tile the space face-to-face.
	Denote the set of vertices of this convex partition by $\mathcal{V}$.
	On each of these polyhedra $P_i$ the function $f$ takes the same constant value, so:
	\[
	f=\sum_i \gamma_i [P_i].	
	\]
	
	We note that each convex polyhedron can be represented as a linear combination of tangent cones of its faces of all dimensions, also known as the Brianchon--Gram relation \cite[Theorem 9.5]{beck2007computing}. Since the tangent cone of a face of dimension $\geq 1$ is a
	 line-cone, we may write:
	\[
	[P_i]=\sum_{{\mathbf{v}} \in \mathcal{V}} \tcone(P_i, {\mathbf{v}}) + \sum_j \alpha_{i,j} [D_j],
	\]
	where $D_j$ are line-cones which are tangent cones corresponding to faces of $P_i$ of dimension $\geq 1$.
	
	Observe now that if $\mathbf{v}$ is not an algebraic vertex of $f$ then $\sum_i\gamma_i\tcone(P_i, {\mathbf{v}})=\tcone(f, {\mathbf{v}})$ can be represented as a linear combination of indicator functions of line-cones.
	 We see that $f$ is a linear combination of indicator functions of line-cones, plus a linear combination of tangent cones at algebraic vertices of $f$.
	 This proves the first part of the Theorem.
	
	It is clear that if the coefficients of $f$ in its basic decomposition are all integers, then in each step we obtain a representation of $f$ with integers coefficients.
	
	Finally, we show that if $f$ is represented as a linear combination of indicator functions of line-cones and simplicial cones, then the set of apices of these simplicial cones should contain~$\mathcal{V}_f$.
	This is similar to the proof of the corresponding statement in Theorem~\ref{thm:sum of simplices}'.
	Suppose that
 \[
 f=\sum_i \alpha_i [D_i]+\sum_i \beta_i [C_i],
 \]
 	for some real $\alpha_i$ and $\beta_i$, line-cones $D_i$, $i=1, \dots, k$, and simplicial cones $C_i$, $i=1, \dots, m$.
	Let $\mathbf{v}\in \mathcal{V}_f$. Note that
	 \[
	 \tcone (f,\mathbf{v})=\sum_i \alpha_i \tcone([D_i],\mathbf{v})+\sum_i \beta_i \tcone([C_i],\mathbf{v}). \]
	If $\mathbf{v}$ is not a vertex of some cone $C_i$, then all $\tcone([C_i], \mathbf{v})$ are line-cones, and $\tcone([D_i],\mathbf{v})$ are line-cones as well.
	Therefore we obtain a representation of $\tcone (f,\mathbf{v})$ as a linear combination of indicator functions of line-cones, contradicting the assumption that $\mathbf{v}$ is an algebraic vertex.
\end{proof}

\medskip
\section{The Fourier--Laplace transform}
\label{sec:fourier transform}
In this section we recall the definition of the Fourier--Laplace transform and list some of its properties. For more details see \cite[Chapter 8]{barvinok2008integer} and \cite[Chapter 10]{beck2007computing}, or the original works \cite{lawrence1991polytope} and \cite{pukhlikov1992riemann}.

The Fourier--Laplace transform of a generalized $d$-polytope $P$ is defined as follows:
\begin{equation*}
\widehat{[P]}(\mathbf{z}) := \int_{P} e^{\langle \mathbf{z}, \mathbf{x} \rangle} d\mathbf{x},
\end{equation*}
valid for all $\mathbf{z} \in \mathbf{C}^d$.

It may be tempting to also define the Fourier--Laplace transform of generalized polyhedra $P$ in the same manner,
but we cannot do so because we run into the problem that there may not be any value of $\mathbf{z} \in \mathbf{C}^d$ for which the latter integral converges, as happens for example when $P$ consists of positive and negative orthants.

We first define the Fourier--Laplace transform of a {\bf convex polyhedral cone} $K$ whose apex is the origin by
\begin{equation}
	\label{eq:FL-integral cone}
\widehat{[K]}(\mathbf{z}) := \int_{K} e^{\langle \mathbf{z}, \mathbf{x} \rangle} d \mathbf{x},
\end{equation}
which converges for all $\mathbf{z} \in \mathbf{C}^d$, for which the real part $\Re(\mathbf{-z})$ lies in the interior of the \textbf{dual cone} $K^\circ$, that is $K^\circ=\{\mathbf{y} \in \mathbf{R}^d: \langle \mathbf{x}, \mathbf{y}\rangle \geq 0 , \forall \mathbf{x} \in K \}$.
 We recall that a
{\bf simplicial cone} $K$ is a convex cone in $\mathbf R^d$ with exactly $d$ edges (also called generators), say $\mathbf{w}_1, \dots, \mathbf{w}_d$, and for such a simplicial cone with apex at the origin, we have the elementary fact that  (see \cite{barvinok2008integer,beck2007computing}):
\begin{equation}\label{eq:transform of a simplicial cone}
\widehat{ [K]}(\mathbf{z}) :=
\frac{|\det K|}{\langle \mathbf{w}_1, \mathbf{z} \rangle \cdots \langle \mathbf{w}_d, \mathbf{z} \rangle}=
\frac{| \mathbf{w}_1\wedge \dots \wedge \mathbf{w}_{d}|}{\langle \mathbf{w}_1, \mathbf{z} \rangle \cdots \langle \mathbf{w}_d, \mathbf{z} \rangle},
\end{equation}
where we define $|\det K|$ to be the absolute value of the determinant of the matrix whose columns are the vectors $\mathbf{w}_j \in \mathbf R^d$.

Let $K_{\mathbf{v}}$ be a cone $K$ shifted by a vector $\mathbf{v}$.
From Equation~\eqref{eq:FL-integral cone} it follows that $\widehat {[K_{\mathbf{v}}]} (\mathbf{z})$ must be defined as $\widehat {[K]}(\mathbf{z}) \cdot e^{\langle \mathbf{z}, \mathbf{v} \rangle}$.

Note that if the edges $\mathbf{w}_1$ and $\mathbf{w}_2$ of a simplicial cone $K$ are parallel to each other then it degenerates to a line-cone and the value in the numerator in \eqref{eq:transform of a simplicial cone} equals zero.
Since each line-cone can be represented as disjoint union of these degenerated line-cones it is natural to define the Fourier--Laplace transform of line-cones as zero.
Finally, using the Brianchon--Gram identity it is possible to extend the definition of the Fourier--Laplace transform to all elements of the algebra of polyhedra $\mathcal{P}^d$ (for full details see Theorem~8.4 in \cite{barvinok2008integer}).

It is clear from the above discussion that the Fourier--Laplace transform of $f\in \mathcal{P}^d$ can be written in the form
\begin{equation}
	\label{eq:f-l representation}
	\sum_{\mathbf{v}\in \mathcal{V}} e^{\langle \mathbf{v}, \mathbf{z} \rangle} F_{\mathbf{v}}(\mathbf{z}),
\end{equation}
where $F_{\mathbf{v}}$ is a finite linear combination of functions of the form $ \prod_1^d \langle \mathbf{w}_j, \mathbf{z} \rangle)^{-1}$, and $\mathcal{V} \subset \mathbf{R}^d$ is a finite set. For instance, for  $\mathcal{V}$ we can choose the set of geometric vertices of a basic decomposition of~$f$.
Theorem~\ref{thm:sum of line-cones and simplicial cones}' shows that $\mathcal{V}=\mathcal{V}_f$ is an appropriate choice.
We will show later that, in fact,~$\mathcal{V}$ essentially coincides with $\mathcal{V}_f$.

We also should note that $\widehat{f}(\mathbf{z})$ is an analytic function of $\mathbf z$, and so it follows that if 
it vanishes on an open $d$-dimensional set, then it vanishes on all of  $\mathbf{R}^d$. 

\bigskip

\section{Polyhedra whose Fourier--Laplace transform vanishes}
\label{sec:proof of the cone-like sum}

Here we prove Theorem~\ref{thm:f-zero cone is sum of line-cones} for any function from the algebra of polyhedra.

\begin{reptheorem}{thm:f-zero cone is sum of line-cones}
	If $f\in \mathcal{P}^d$ has zero Fourier--Laplace transform, then it has no algebraic vertices, or equivalently by Theorem~\ref{thm:sum of line-cones and simplicial cones}', we have
	\[f= \sum_{i=1}^k \alpha_i [D_i]\]
	for some real numbers $\alpha_i$ and line-cones $D_i$, $i=1, \dots, k$.
\end{reptheorem}


Let $\mathcal{V}_f$ be the set of algebraic vertices of $f$, and assume that $\mathcal{V}_f\ne \emptyset$.
Rearrange the members in the representation of $f$ as the sum from Theorem~\ref{thm:sum of line-cones and simplicial cones}':
\[
f   = \sum_{i=1}^k \alpha_i [D_i]+ \sum_{\mathbf{v} \in \mathcal{V}_f} \tcone(f,\mathbf{v}).
\]
The proof consist of two parts. Lemma~\ref{lem:polyconical zero-f is a zum of line-cones} below implies that the Fourier-Laplace transform of $\tcone(f,\mathbf{v})$ is non-zero because it cannot be represented as a linear combination of line-cones.

All the algebraic vertices $\mathbf{v} \in \mathcal{V}_f$ are  distinct. Then Lemma~\ref{lem:sum of polyconical functions} shows that the Fourier--Laplace transform of their sum should be non-zero as well.

The following lemma is crucial to a lot of the analysis that ensues, showing in particular that if we consider the union of pointed cones 
which lie in the interior of a half space, then the transform of this union cannot vanish.

\begin{lemma}
	\label{lem:upper cones}
	Suppose we have $n$ internally disjoint pointed cones $C_i$ lying in the upper halfspace of $\mathbf R^d$. Let $g=\sum_{i=1}^n \alpha_i [C_i]$.
	Then
	\[
	\widehat g(\mathbf{\cdot})=0 \text{ if and only if all } \alpha_i=0.
	\]
\end{lemma}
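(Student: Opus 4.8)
The plan is to prove the non-trivial direction: if $\widehat g \equiv 0$ and the $C_i$ are internally disjoint pointed cones in the upper halfspace $\{x_d \ge 0\}$, then all $\alpha_i = 0$. The key idea is to exploit the fact that the $C_i$ live on one side of a hyperplane, so no cancellation of ``exponential rates'' can occur, and then to localize the analysis at the \emph{narrowest} cone, or rather at a boundary ray where the geometry is most constrained. Concretely, I would first reduce to the case where all the $C_i$ are simplicial: triangulate each pointed cone $C_i$ from its apex (the origin) without introducing new rays, so that $g = \sum_j \beta_j [S_j]$ with the $S_j$ simplicial cones in the upper halfspace, still internally disjoint, and with the property that $\beta_j = 0$ for all $j$ iff $\alpha_i = 0$ for all $i$ (the triangulation of a pointed cone is honest, so the coefficients propagate). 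By \eqref{eq:transform of a simplicial cone}, $\widehat g(\mathbf z) = \sum_j \beta_j \dfrac{|\det S_j|}{\langle \mathbf w_1^j, \mathbf z\rangle \cdots \langle \mathbf w_d^j, \mathbf z\rangle}$, a rational function that must vanish identically.

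Next I would analyze this rational-function identity. Clearing denominators, $\sum_j \beta_j |\det S_j| \prod_{k \ne \text{(corresponding)}} \langle \mathbf w^\cdot, \mathbf z\rangle = 0$ as a polynomial in $\mathbf z$. The main tool is to pick, among all the edge vectors $\mathbf w$ appearing, one that is ``extreme'' — say, a ray $\mathbf w_0$ that spans an exposed edge of the convex hull $\operatorname{pos}(\bigcup_j S_j)$, chosen so that only the cones $S_j$ actually containing $\mathbf w_0$ on their boundary can contribute a pole along the hyperplane $\langle \mathbf w_0, \mathbf z\rangle = 0$. Restricting the identity to a generic line approaching that hyperplane and comparing the order of the pole (equivalently: multiplying by $\langle \mathbf w_0,\mathbf z\rangle$ and setting $\langle \mathbf w_0,\mathbf z\rangle = 0$), I would extract a smaller identity among the cones incident to $\mathbf w_0$. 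Because these cones are internally disjoint and all sit in the upper halfspace, the residue picks out genuinely the transforms of the ``link'' cones in the quotient space $\mathbf R^d / \langle \mathbf w_0\rangle$, which again lie in a halfspace; an induction on the dimension $d$ (base case $d = 1$ being immediate, since a pointed cone in $\mathbf R^{\ge 0}$ is a ray or a point) then forces the incident coefficients to vanish. Peeling off cones one extreme edge at a time kills every $\beta_j$.

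An alternative, possibly cleaner, route avoids residues: since all $C_i$ lie in $\{x_d \ge 0\}$, the integral $\int_{C_i} e^{\langle \mathbf z, \mathbf x\rangle}\,d\mathbf x$ converges for $\mathbf z$ with $\Re(z_d)$ very negative and $\Re(z_1), \dots, \Re(z_{d-1})$ small. On that common domain of convergence, $\widehat g(\mathbf z) = \int_{\mathbf R^d} g(\mathbf x) e^{\langle \mathbf z,\mathbf x\rangle}\,d\mathbf x$ is an honest (absolutely convergent) Laplace transform of an $L^1_{\mathrm{loc}}$ function supported in a halfspace and bounded; by injectivity of the Laplace transform on such a domain, $\widehat g \equiv 0$ forces $g = 0$ a.e., and then internal disjointness of the $C_i$ (each $C_i$ contains interior points lying in no other $C_j$) forces every $\alpha_i = 0$. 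The delicate point here is justifying that a common half-plane of convergence exists — this is exactly where ``pointed'' and ``upper halfspace'' are used: pointedness of $C_i$ gives that the dual cone $C_i^\circ$ is full-dimensional, and all the $C_i^\circ$ contain a common direction (roughly $-e_d$) in their interior, so $-\Re(\mathbf z)$ can be chosen in $\bigcap_i \operatorname{int} C_i^\circ \ne \emptyset$.

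I expect the main obstacle to be the bookkeeping in whichever approach is chosen: in the residue approach, verifying that taking the residue along $\langle \mathbf w_0, \mathbf z\rangle = 0$ exactly recovers the Fourier–Laplace transforms of the projected link cones (and that these remain internally disjoint pointed cones in a halfspace of $\mathbf R^{d-1}$); in the Laplace-transform approach, pinning down that $\bigcap_i \operatorname{int}(C_i^\circ)$ is nonempty and that injectivity applies verbatim to functions merely bounded and compactly-supported-in-one-direction. The Laplace-transform version is shorter to write and I would lead with it, keeping the inductive residue argument as a fallback if a referee objects to the convergence claim.
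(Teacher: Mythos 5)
Your second (Laplace--inversion) route is correct and genuinely different from the paper's argument, while your first route is essentially the paper's own proof: the paper picks an extreme ray $\mathbf{w}$ of the cone generated by all the generators, evaluates the transform at $\mathbf{u}_\varepsilon=(\mathbf{z}+\varepsilon\mathbf{w})/\|\mathbf{z}+\varepsilon\mathbf{w}\|$ with $\mathbf{z}\perp\mathbf{w}$, extracts the $1/\varepsilon$ pole, identifies its coefficient with the $(d-1)$-dimensional transform of the cones projected to $\mathbf{w}^{\perp}$, and inducts on $d$ --- exactly the residue scheme you sketch, with the bookkeeping you flag carried out explicitly. The route you propose to lead with bypasses the induction: on the tube $\{\Re\mathbf{z}=-t\mathbf{e}_d\}$ the (a priori meromorphically defined) transform coincides with the absolutely convergent integral, so varying the imaginary part exhibits $\widehat g$ as the Fourier transform of the $L^1$ function $g(\mathbf{x})e^{-tx_d}$; Fourier uniqueness gives $g=0$ almost everywhere, and internal disjointness of the full-dimensional cones then forces each $\alpha_i=0$. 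This is shorter and conceptually cleaner, at the price of leaving the rational-function calculus the paper works in; the paper's inductive argument is self-contained within that calculus and its projection step is reused almost verbatim in the proof of Lemma~\ref{lem:section of polyconical functions}. One caveat for your version: ``lying in the upper halfspace'' must be read strictly, i.e.\ $C_i\setminus\{\mathbf{0}\}\subset\{x_d>0\}$ (all generators with $x_d>0$); only then does compactness of $C_i\cap S^{d-1}$ give $x_d\ge c_i\|\mathbf{x}\|$ on $C_i$, hence $\mathbf{e}_d\in\operatorname{int} C_i^{\circ}$ for every $i$ and a nonempty common domain of convergence. With the closed halfspace the common domain can be empty and the lemma itself fails (e.g.\ $[\pos\{\mathbf{e}_1,\mathbf{e}_2\}]+[\pos\{-\mathbf{e}_1,\mathbf{e}_2\}]$ has identically vanishing transform), so pointedness alone does not deliver the common direction you invoke; the paper's proof tacitly makes the same strict assumption (it needs the hull cone $C$ to be pointed), and the strict version is exactly what is used in Lemma~\ref{lem:polyconical zero-f is a zum of line-cones}, so your argument is sound once this reading is made explicit.
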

\begin{proof}
	Let $C$ be the cone generated by all generators of the cones $C_i$. Thus $C$ is a pointed cone.
	Suppose $\widehat g(\mathbf{z})=0$ for all $\mathbf{z}$ belonging to the interior of the dual cone $C^\circ$.
	We assume by contradiction that all $\alpha_i$ are different from zero.
	Let $\mathbf{w}$ be one of its extreme ray directions of~$C$, and let $C_1$, \dots, $C_k$ be the cones that have $\mathbf{w}$ as a generator.
	We assume without loss of generality that $\|\mathbf{w}\|=1$.
	We are going to show that $\alpha_i=0$ for $i=1,\ldots,k$.
	This contradiction will finish the proof.
		

	\medskip
	From here we proceed by induction on the dimension $d$. For $d=1$ there is a unique cone, which is the ray generated by $\mathbf{w}$.
	So $\widehat g(\mathbf{z})=\frac{\alpha_1}{\langle \mathbf{w}, \mathbf{z} \rangle}$, and it is clear that $\alpha_1=0$.
	The case $d=2$ is also somewhat special since then there is only one cone $C_1$ containing $\mathbf{w}$.
	Let $\mathbf{z}$ be the unique unit vector orthogonal to $\mathbf{w}$ and having $\langle \mathbf{z},\mathbf{w}_j \rangle >0$ for all other generators $\mathbf{w}_j$.
	Define for $\varepsilon >0$
	\[
	\mathbf{u}=\mathbf{u}_{\varepsilon}=\frac {\mathbf{z}+\varepsilon \mathbf{w}}{||\mathbf{z}+\varepsilon \mathbf{w}||}.
	\]
	If $\varepsilon$ is small enough then $\mathbf{u}$ is a unit vector with $\langle \mathbf{u},\mathbf{w}_j \rangle >0$ for all generators including $\mathbf{w}$ as well. It is easy to check using~\eqref{eq:transform of a simplicial cone} that $\widehat {[C_i]}(\mathbf{u})$ are bounded for all $i>1$. Denoting the second generator of $C_1$ by $\mathbf{w}_1$,
 \[
 \widehat {[C_1]}(\mathbf{u})=\frac{| \mathbf{w} \wedge \mathbf{w}_1|}{
	\langle \mathbf{w}, \mathbf{u} \rangle \langle \mathbf{w}_1,\mathbf{u} \rangle}=\frac{| \mathbf{w} \wedge \mathbf{w}_1|}
	{\varepsilon \langle \mathbf{w}_1, \mathbf{z}\rangle(1+o(1))},
 \]
So $\widehat g(\mathbf{u})=0$ for all $\varepsilon >0$ is only possible if $\alpha_1=0$.
	
	Now for the step $d-1 \to d$ with $d>2$, we write $G=\{\mathbf{x} \in \mathbf R^d: \langle \mathbf{w},\mathbf{x}\rangle=0\}$; this is a copy of $\mathbf{R}^{d-1}$ of course.
	Let $C_i'$ denote the orthogonal projection of $C_i$ to $G$ for $i=1,\ldots,k$.
	Let $\mathbf{z} \in G$ be a unit vector with $\langle \mathbf{z} ,\mathbf{w}_j \rangle>0$ for all generators different from $\mathbf{w}$.
	The set of such $\mathbf{z}$'s is exactly the set of unit normal vectors to those support hyperplanes of $C$ that contain only $\mathbf{w}$ from the generators of $C$.  
	Observe now that all $C_i'$, $i=1,\ldots,k$ lie in $\{\mathbf{x} \in G: \langle \mathbf{x}, \mathbf{z}\rangle>0\}$ which is an open halfspace in $G$.
	Define $\mathbf{u}=\mathbf{u}_{\varepsilon}$ the same way as above; so $\mathbf{u}$ is a unit vector with $\langle \mathbf{w}_j, \mathbf{u} \rangle >0$ for all generators including $\mathbf{w}$ as well.

	Note that for $i=1,\ldots,k$
	\[
	\widehat {[C_i]}(\mathbf{u})=\frac{\widehat {[C'_i]}(\mathbf{z})+O(\varepsilon)}{\varepsilon}.
	\]	
	Indeed, suppose $C_i$ is generated by vectors $\mathbf{w}_1$, \dots $\mathbf{w}_{d-1}$, $\mathbf{w}_d=\mathbf{w}$.
	Denote by $\mathbf{w}'_j$ the projection of~$\mathbf{w}_j$ on the hyperplane $G$.
	Then
	\[
	\widehat {[C_i]}(\mathbf{u})=\frac{| \mathbf{w}_1\wedge \dots \wedge \mathbf{w}_{d}|}{\prod_{j=1}^d
	\langle \mathbf{w}_j, \mathbf{u} \rangle}=
	\frac{|\mathbf{w}'_1\wedge \dots \wedge \mathbf{w}'_{d-1}|}{\prod_{j=1}^{d-1}(\langle
	\mathbf{w}'_j, \mathbf{z} \rangle+ \varepsilon \langle \mathbf{w}, \mathbf{w}_j\rangle)} \cdot
	\frac{\|\mathbf{w}\|}{\varepsilon \|\mathbf{w}\|^2}
	=(\widehat {[C_i']}(\mathbf{z})+O(\varepsilon))\cdot \frac{1}{\varepsilon}.
	\]
	Note that $\widehat {[C_i]}(\mathbf{u})$ for $i>k$ is bounded as $\varepsilon$ goes to zero. Therefore
	\[
	0=\widehat g(\mathbf{u})=\frac{1}{\varepsilon} \sum_1^k \alpha_i \widehat {[C_i']}(\mathbf{z}) +O(1).
	\]
	Multiplying by $\varepsilon$ and taking the limit $\varepsilon \to 0$ gives the equation $\sum_1^k \alpha_i \widehat {[C_i']}(\mathbf{z})=0$.
	This holds for all unit vectors $\mathbf{z}\in G$ for which $\langle \mathbf{z}, \mathbf{w}_j \rangle>0$ for every generator of the cones $C_i$, $i=1,\ldots,k$, which is the same as $\langle \mathbf{z}, \mathbf{w}_j' \rangle>0$ for every generator of the cones $C'_i$, $i=1,\ldots,k$.
	Observe that $\sum_1^k \alpha_i \widehat {[C_i']}(\mathbf{z})$ is the Fourier--Laplace transform (taken in $G=\mathbf{R}^{d-1}$) of the function $\sum_1^k \alpha_i [C'_i]$ and so by the induction hypothesis all $\alpha_i=0$ for $i=1,\ldots,k$.
\end{proof}

\medskip

\begin{lemma}
	\label{lem:polyconical zero-f is a zum of line-cones}
	A polyconical function $g$ has zero Fourier--Laplace transform if and only if it is a linear combination of line-cones.
\end{lemma}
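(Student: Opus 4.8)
The plan is to dispatch the easy implication by definition and prove the substantive one by induction on the dimension $d$. If $g$ is a linear combination of indicator functions of line-cones, then $\widehat g=0$, because the Fourier--Laplace transform is linear on $\mathcal P^d$ and, by the convention fixed in Section~\ref{sec:fourier transform}, it vanishes on every line-cone. So the content is the converse: a polyconical $g$ with $\widehat g=0$ is a line-cone combination.

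First I would normalize the presentation. Using Lemma~\ref{lem:union of simplicas} on the cones defining $g$ and then triangulating, write $g=\sum_j\gamma_j[K_j]$ with the $K_j$ simplicial cones with apex at the origin forming a fan of $\mathbf R^d$; note that in this case every facet-hyperplane of $g$ passes through the origin. The base case $d=1$ is immediate: $g=a[\mathbf R_{\ge 0}]+b[\mathbf R_{\le 0}]$, and \eqref{eq:transform of a simplicial cone} forces $a=b$, so $g$ is constant. Assume the statement in dimension $d-1$.

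The engine of the induction is a limit formula for the facets $g_h$ (Definition~\ref{def:facet of polytope}): for a hyperplane $h$ through the origin with unit normal $\mathbf n$, and $\mathbf z\in h$,
\begin{equation*}
\widehat{g_h}(\mathbf z)=\lim_{t\to+\infty}t\,\widehat g(\mathbf z+t\mathbf n).
\end{equation*}
This is proved by expanding $\widehat g(\mathbf z+t\mathbf n)=\sum_j\gamma_j\widehat{[K_j]}(\mathbf z+t\mathbf n)$ via \eqref{eq:transform of a simplicial cone}: the factor contributed by a generator $\mathbf w_{j,l}$ of $K_j$ is bounded in $t$ exactly when $\mathbf w_{j,l}\in h$, so $t\,\widehat{[K_j]}(\mathbf z+t\mathbf n)$ has a finite limit precisely when $K_j$ has a facet $F_j\subset h$, and that limit is $\pm\widehat{[F_j]}(\mathbf z)$ (using $|\det K_j|=|\det F_j|\,|\langle\mathbf w_{j,d},\mathbf n\rangle|$ for the remaining generator $\mathbf w_{j,d}$, with sign given by the side of $h$ on which $K_j$ sits); cones whose relative interior meets $h$ contribute equally to $g_h^{+}$ and $g_h^{-}$ and cancel, so the sum is exactly $g_h=g_h^{+}-g_h^{-}$. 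Hence $\widehat g=0$ forces $\widehat{g_h}=0$ for every hyperplane $h$ through the origin; since $g_h$ is polyconical on $h\cong\mathbf R^{d-1}$, the induction hypothesis gives that each $g_h$ is a line-cone combination in $h$. (Alternatively one could read off $\widehat{g_h}=0$ from Lemma~\ref{lem:upper cones} applied to the cones of $g$ on either side of $h$.)

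Finally I would reassemble $g$ from its facets, which forces it into $\mathcal L$. Apply the pyramid-over-facets identity from the proof of Theorem~\ref{thm:sum of simplices} with apex $\mathbf p=t\mathbf m$ for a fixed generic $\mathbf m$: since $g$ is a cone at the origin, for every $t$ one has $g=g(\,\cdot-t\mathbf m)+\sum_h \mathrm{Pyr}_h$, where $\mathrm{Pyr}_h$ has apex $t\mathbf m$ and base $g_h$. Letting $t\to+\infty$, for a.e. $\mathbf x$ the two terms stabilize: $g(\mathbf x-t\mathbf m)$ becomes the constant $g(-\mathbf m)$, and $\mathrm{Pyr}_h(\mathbf x)$ becomes $g_h(\pi_h(\mathbf x))$ on one side of $h$ and $0$ on the other, where $\pi_h$ is the projection onto $h$ along $\mathbf m$; call this limiting element $\mathrm{Cyl}_h$. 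Thus $g=g(-\mathbf m)[\mathbf R^d]+\sum_h \mathrm{Cyl}_h$. The first term is a line-cone, and if $g_h=\sum_i\alpha_i[D_i]$ with $D_i\subset h$ line-cones, then each piece of $\mathrm{Cyl}_h$ inherits the lineality direction $\mathbf u_i\in h$ of $D_i$ (which lies in the lineality space of the half-space used), so $\mathrm{Cyl}_h$, and hence $g$, is a line-cone combination; this closes the induction. I expect the main obstacle to be the limit formula above — keeping the orientation-induced signs straight, checking the volume identity $|\det K_j|=|\det F_j|\,|\langle\mathbf w_{j,d},\mathbf n\rangle|$, and verifying the cancellation of the interior contributions to $g_h^{\pm}$ — together with the routine but fussy verification that the pyramid/cylinder identity is genuinely attained for all large $t$ off a null set.
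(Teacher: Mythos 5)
Your proof is correct, but it takes a genuinely different route from the paper's. The paper reduces everything to Lemma~\ref{lem:upper cones}: via Claim~\ref{cl:all cones in upper halfspace} it replaces each simplicial cone, modulo line-cones, by one lying in a fixed open halfspace $H$, refines the result into internally disjoint cones contained in $H$, and then Lemma~\ref{lem:upper cones} forces all the remaining coefficients to vanish, so $g$ equals the discarded line-cone part outright. You instead induct on the dimension through the signed sections: the limit formula $\widehat{g_h}(\mathbf z)=\lim_{t\to\infty}t\,\widehat g(\mathbf z+t\mathbf n)$ shows $\widehat{g_h}=0$ for every facet, the induction hypothesis makes each $g_h$ a line-cone combination inside $h$, and the conical pyramid identity $g=g(-\mathbf m)[\mathbf R^d]+\sum_h \mathrm{Cyl}_h$ reassembles $g$ from half-cylinders over its facets, each of which inherits the lineality direction of the corresponding $D_i\subset h$. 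The analytic germ is the same in both arguments, namely isolating the single unbounded factor of \eqref{eq:transform of a simplicial cone}, but the paper applies it to the link of an extreme ray (projection onto $\mathbf w^{\perp}$) inside Lemma~\ref{lem:upper cones}, whereas you apply it to facet hyperplanes; in effect you prove the hard direction of Lemma~\ref{lem:section of polyconical functions} directly and then pay for it with the reconstruction step, which the paper's halfspace normalization renders unnecessary (it concludes $g^*=0$, not merely that $g^*$ is a line-cone combination). The steps you flag as fussy do all check out: $\lvert\det K_j\rvert=\lvert\det F_j\rvert\cdot\lvert\langle\mathbf w_{j,d},\mathbf n\rangle\rvert$ is the correct volume factorization, cones straddling $h$ contribute nothing to either side of the limit formula, and $D_i+\mathbf R_{\ge 0}\mathbf m$ is a cone that is a union of lines parallel to the lineality direction of $D_i$, hence a line-cone.
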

\begin{proof}
One direction is easy: if $g$ is a linear combination of line-cones, then it has zero Fourier--Laplace transform because all line-cones have zero Fourier--Laplace transform.
For the other direction we consider the decomposition of $g$ into simplicial cones:
\[
	g=\sum_{i=1}\alpha_i[C_i].
\]

We will show how to reduce the proof to the context of Lemma \ref{lem:upper cones}, namely,	
it suffices to prove the theorem in a simpler case,  when

(i) all the $C_i$ are internally disjoint pointed cones, and

(ii) there is a unit vector $\mathbf{v}$ such that $\langle \mathbf{v},\mathbf{w} \rangle >0$ for every generator $\mathbf{w}$ of every $C_i$.

To see this we start with a simplicial cone $C$ whose generators are $\mathbf{w}_1,\ldots,\mathbf{w}_d$, that is, these vectors are linearly independent and every point in $C$ is a linear combination with non-negative coefficients of the $\mathbf{w}_j$, that is, $C=\pos\{\mathbf{w}_1,\ldots,\mathbf{w}_d\}$. Let, further, $\mathbf{v}$ be a unit vector with $\langle \mathbf{v},\mathbf{w}_j \rangle \ne 0$ for any $j$. Define $\mathbf{w}_j^*$ as $\mathbf{w}_j$ if $\langle \mathbf{v},\mathbf{w}_j \rangle >0$ and $-\mathbf{w}_j$ if $\langle \mathbf{v},\mathbf{w}_j \rangle <0$. Set $C^*=\pos\{\mathbf{w}_1^*,\ldots,\mathbf{w}_d^*\}$. We start with a simple

\begin{claim}
	\label{cl:all cones in upper halfspace}
	There are coefficients $\varepsilon \in \{-1,1\}$ and $\varepsilon_i\in \{-1,0,1\}$ and line-cones $D_i$ such that $[C]=\varepsilon[C^*]+\sum_i^d \varepsilon_i[D_i]$.
\end{claim}

Proof of the claim.  \ \ 
	There is nothing to prove if $\langle \mathbf{v},\mathbf{w}_j \rangle >0$ for all $j$. Assume $\langle \mathbf{v},\mathbf{w}_1 \rangle <0$, say, and observe that
	\begin{eqnarray*}
	[C]&=&[\pos\{\mathbf{w}_1,\ldots,\mathbf{w}_d\}]\\
	 &=& [\pos\{\mathbf{w}_1,-\mathbf{w}_1,\mathbf{w}_2,\ldots,\mathbf{w}_d\}]\setminus [\pos\{-\mathbf{w}_1,\mathbf{w}_2,\ldots,\mathbf{w}_d\}].
	\end{eqnarray*}
	The cone $\pos\{\mathbf{w}_1,-\mathbf{w}_1,\mathbf{w}_2,\ldots,\mathbf{w}_d\}$ is a line-cone and the simplicial cone
	 $\pos\{-\mathbf{w}_1,\mathbf{w}_2,\ldots,\mathbf{w}_d\}=\pos\{\mathbf{w}_1^*,\mathbf{w}_2,\ldots,\mathbf{w}_d\}$ has one fewer generators than $C$ with $\langle \mathbf{v}, \mathbf{w} \rangle <0$. Continuing this way we end up with the required formula, finishing the proof of the claim.

 \medskip
 \noindent 
We return now to the reduction of our Lemma to Lemma  \ref{lem:upper cones}.  
    Applying the Claim to each cone $C_i$ we have
	\[
	g=\sum_{i=1}^n \alpha_i [C_i]=\sum_{i=1}^n \alpha_i\varepsilon_i[C_i^*]
	+\sum_{j=1}^k\gamma_j[D_j]
	\]
	where $\varepsilon_i=\pm 1$ and the $D_j$ are line-cones. It follows that the Fourier--Laplace transform of
	$g^*=\sum_{i=1}^n \alpha_i\varepsilon_i[C_i^*]$ is zero.

	\medskip
	The advantage of $g^*$ is that it is a linear combination (of the indicator functions) of simplicial cones $C^*_i$ and each $C^*_i$ lies in the open halfspace $H=\{\mathbf{x}\in \mathbf R^d: \langle \mathbf{x}, \mathbf{v}\rangle>0\}$.
	
	Consider now all the hyperplanes containing a facet of some $C_i^*$. The complement of their union in $\mathbf R^d$ consists of finitely many cones, and on each such cone, $F$ say, $g^*$ is constant. This constant is zero if the cone $F$ does not lie completely in $H$. Thus $g^*=\sum_{j=1}^m\beta_j[F_j]$ where each $\beta_j\ne 0$ and the $F_j$ are pairwise internally disjoint cones, each contained in $H$. This finishes the proof of the reduction, and of the Lemma.

\end{proof}

\begin{lemma}
	\label{lem:sum of polyconical functions}
	Suppose we are given a collection of $n$ distinct points $\textbf{v}_i \in \mathbf{R}^d$
	and $n$ polyconical functions $f_i$, whose Fourier--Laplace transform $\widehat {f_i}$ is not the zero function.
	
	Let $g (\mathbf{z}) := \sum_{i=1}^n \alpha_i f_i(\mathbf{z}-\mathbf{v}_i)$, with all $\alpha_i \in \textbf{R}$.
	Then
	\[
	\widehat g(\mathbf{\cdot}) = 0\text{ if and only if all }\alpha_i = 0.
	\]
\end{lemma}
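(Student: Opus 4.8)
The plan is to push everything through the Fourier--Laplace transform and reduce the assertion to a one-variable statement about the linear independence of exponentials over the field of rational functions. First I would observe that, since the transform of a translate satisfies $\widehat{f_i(\cdot-\mathbf v_i)}(\mathbf z)=e^{\langle\mathbf z,\mathbf v_i\rangle}\widehat{f_i}(\mathbf z)$, linearity of the Fourier--Laplace transform gives
\[
\widehat g(\mathbf z)=\sum_{i=1}^n\alpha_i\,e^{\langle\mathbf z,\mathbf v_i\rangle}\,\widehat{f_i}(\mathbf z).
\]
Each $\widehat{f_i}$ is a nonzero rational function (a finite linear combination of terms $\prod_j\langle\mathbf w_j,\mathbf z\rangle^{-1}$), defined on the complement $U$ of a finite union of hyperplanes, so $\widehat g$ is a well-defined analytic function on the dense open set $U$. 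The ``if'' direction is immediate. For the converse I assume $\widehat g\equiv 0$ on $U$ while the index set $S=\{i:\alpha_i\ne 0\}$ is nonempty, and derive a contradiction.

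Next I would restrict to a generic line through the origin. The goal is to choose $\mathbf u\in\mathbf R^d$ so that: (a) the real numbers $\lambda_i:=\langle\mathbf v_i,\mathbf u\rangle$, $i\in S$, are pairwise distinct --- possible because $\mathbf v_i\ne\mathbf v_j$ forces $\langle\mathbf v_i-\mathbf v_j,\mathbf u\rangle\ne 0$ off a hyperplane; and (b) for each $i\in S$ the restriction $r_i(t):=\widehat{f_i}(t\mathbf u)$ is a nonzero rational function of the single real variable $t$ --- writing $\widehat{f_i}=N_i/D_i$ with $N_i$ a nonzero polynomial and $D_i$ a product of linear forms, the top homogeneous part of $N_i$ and the linear factors of $D_i$ are nonzero polynomials, and their non-vanishing at $\mathbf u$ excludes only a proper algebraic subset (this also ensures $\mathbf R\mathbf u$ meets $U$). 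Since each condition fails only on a measure-zero set, a common good $\mathbf u$ exists. Substituting $\mathbf z=t\mathbf u$ into $\widehat g\equiv 0$ then yields, for all sufficiently large $t$,
\[
\sum_{i\in S}\alpha_i\,e^{\lambda_i t}\,r_i(t)=0,
\]
with the $\lambda_i$ distinct reals and each $r_i$ a nonzero rational function.

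Finally I would dispose of this one-variable identity by an asymptotic argument as $t\to+\infty$. Let $m\in S$ be the index with $\lambda_m=\max_{i\in S}\lambda_i$; each $r_i(t)$ is asymptotic to $c_i t^{k_i}$ for some $c_i\ne 0$ and $k_i\in\mathbf Z$, so the $m$-th summand has order $|\alpha_m c_m|\,t^{k_m}e^{\lambda_m t}$ while every other summand with $i\in S$ is $o(t^{k_m}e^{\lambda_m t})$ because $\lambda_i<\lambda_m$. Hence the left-hand side is asymptotic to $\alpha_m c_m t^{k_m}e^{\lambda_m t}\ne 0$, contradicting that it vanishes identically; therefore $S=\emptyset$, i.e. all $\alpha_i=0$. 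There is no serious obstacle here: the asymptotic comparison is the conceptual heart of the argument, and it works cleanly precisely because the exponents $\lambda_i=\langle\mathbf v_i,\mathbf u\rangle$ are real. The only mildly delicate point is the genericity bookkeeping in step (b) --- ensuring that the chosen line avoids the loci where some $\widehat{f_i}$ degenerates to the zero function.
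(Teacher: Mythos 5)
Your proof is correct and is essentially the paper's argument: both restrict $\widehat g(\mathbf z)=\sum_i\alpha_i e^{\langle\mathbf v_i,\mathbf z\rangle}\widehat{f_i}(\mathbf z)$ to a ray $t\mathbf u$ and let $t\to\infty$, so that the term with the largest exponent $\langle\mathbf v_i,\mathbf u\rangle$ dominates the exponentially smaller ones and its coefficient is forced to vanish. The only difference is bookkeeping: the paper takes $\mathbf v_1$ an extreme point of the convex hull of the $\mathbf v_i$ and $\mathbf u$ in the interior of the dual of the tangent cone there (so the dominant exponent is $0$ and the rest are negative), then concludes $\widehat{f_1}\equiv 0$ from vanishing on an open set of $\mathbf u$, whereas you choose a generic $\mathbf u$ making all exponents distinct and the restricted rational functions nonzero, and read off the contradiction from the one-variable asymptotics.
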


\begin{proof}
	Let $P$ be the convex hull of the points $\mathbf{v}_i$.
	Without loss of generality we can assume that~$\mathbf{v}_1$ is an extreme point of $P$.
	We translate everything so that $\mathbf{v}_1$ becomes the origin.
	Let $K$ be the dual cone to the tangent cone of $P$ at $\mathbf{v}_1$, and fix $\mathbf{u}$ to be any point in the interior of $K$.
	
	Denote by $g_i(\mathbf{z}) := f_i(\mathbf{z}-(\mathbf{v}_i-\mathbf{v}_1))$.
	It is enough to prove that $\sum \alpha_i g_i$ has zero Fourier--Laplace transform if and only if all $\alpha_i=0$.

As the apex of every cone defining $g_i$ is $\mathbf{v}_i-\mathbf{v}_1$, and its Fourier--Laplace transform is of the form
\[
e^{\langle \mathbf{v}_i-\mathbf{v}_1, \mathbf{z}\rangle} F_i(\mathbf{z}),
\]
where $F_i(\mathbf{z})$ is a linear combination of functions of the form 
$(\prod_1^d \langle\mathbf{w}_j,\mathbf{z}\rangle)^{-1}$.
Then for every $i > 1$ the value $\widehat{g_i}(t\textbf{u})$ tends to $0$ like $o(e^{tc_i})$, with $c_i<0$, as $t \rightarrow \infty$. But the function $\widehat {g_1}(\mathbf{z})$ does not have an exponential in the numerator, since its associated vertex is the origin, and hence  $\widehat {g_1}(t\textbf{u}) = t^{-d} \cdot \widehat {g_1}(\textbf{u})$ tends to $0$ like $O(t^{-d})$. Therefore $\widehat {g_1}(\mathbf{u}) = 0$ for all $\mathbf{u} \in K$.	Since $K$ is $d$-dimensional, we see that $\widehat {g_1}(\textbf{z}) = 0$ for all $\mathbf{z}$ and hence $\widehat {f_1}$ vanishes identically.
	This last conclusion contradicts our assumption about the $f_i$'s.
\end{proof}

\begin{remark}
	\label{rem:unique representation}
	Using the same argument in the Lemma above, we return to formula~\eqref{eq:f-l representation} and claim~$F_{\mathbf{v}}$ is zero if $\mathbf{v}\notin \mathcal{V}_f$.
	This will show that  the representation of the Fourier-Laplace transform in the form of \eqref{eq:f-l representation} is essentially unique.
	The proof is simple. Suppose there are two representations.  We then subtract the two representations from each other to obtain  
	\[
		0=\sum_{\mathbf{v}\in \mathcal{V}'} e^{\langle \mathbf{v}, \mathbf{z} \rangle} F_{\mathbf{v}}(\mathbf{z}),
	\]
	for some finite set $\mathcal{V}'$.
	Now repeating the arguments from the proof of Lemma~\ref{lem:sum of polyconical functions} leads to a contradiction.
\end{remark}

\section{Signed sections of polyhedra}
\label{sec:signed section}
\medskip

The following theorem shows the strong correlation between $f\in \mathcal{P}^d$ and its signed sections~$f_h$.
\begin{theorem}
	\label{thm:section theorem}
	Let $f \in \mathcal{P}^d$. Then $\widehat {f}(\cdot)=0$ if and only if $\widehat {f}_h=0$ for any oriented hyperplane~$h$.
\end{theorem}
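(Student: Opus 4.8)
The plan is to prove Theorem~\ref{thm:section theorem} by establishing the two implications separately, with the substantive content concentrated in the forward direction. For the easy direction, suppose $\widehat{f}=0$. I would first show that the operation of taking a signed section commutes, in an appropriate sense, with the Fourier--Laplace transform: passing to $f_h^+$ and $f_h^-$ corresponds on the transform side to an integration/limit that picks out the ``boundary term'' of $\widehat{f}$ across $h$. Concretely, writing coordinates so that $h=\{x_d=0\}$ with normal $\mathbf{n}_h=\mathbf{e}_d$, one checks on the generators of $\mathcal{P}^d$ (shifted simplicial cones, by the discussion around~\eqref{eq:transform of a simplicial cone}) that $\widehat{f_h}(\mathbf{z}')$, for $\mathbf{z}'\in\mathbf{C}^{d-1}$, is obtained from $\widehat{f}(\mathbf{z}',z_d)$ by a residue-type operation as $z_d\to\infty$ in a suitable cone — so $\widehat{f}\equiv 0$ forces $\widehat{f_h}\equiv 0$. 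This is essentially a computation on the explicit rational form~\eqref{eq:f-l representation}, using that $f_h^\pm\in\mathcal{P}^{d-1}$ as already noted in the text.

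For the converse — the harder and more interesting direction — suppose $\widehat{f_h}=0$ for every oriented hyperplane $h$; I want to conclude $\widehat{f}=0$, equivalently (by Theorem~\ref{thm:f-zero cone is sum of line-cones}' and Theorem~\ref{thm:sum of line-cones and simplicial cones}') that $f$ has no algebraic vertices. I would argue by contradiction: let $\mathbf{v}\in\mathcal{V}_f$. The key geometric input is Lemma~\ref{lem:vertex of a facet is a vertex of polytope} read in reverse: I need that an algebraic vertex of $f$ ``shows up'' in some facet of $f$. More precisely, I would reduce to the tangent cone $\tcone(f,\mathbf{v})$, which is a polyconical function with a genuine algebraic vertex at $\mathbf{v}$ (not a combination of line-cones), hence by Lemma~\ref{lem:polyconical zero-f is a zum of line-cones} has nonzero Fourier--Laplace transform. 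Then I would find an oriented hyperplane $h$ through $\mathbf{v}$ such that the signed section $(\tcone(f,\mathbf{v}))_h$ — which equals $\tcone(f_h,\mathbf{v})$ up to the line-cone caveat, as in the proof of Lemma~\ref{lem:vertex of a facet is a vertex of polytope} — still has $\mathbf{v}$ as an algebraic vertex. Iterating this drop in dimension, after $d-1$ steps one lands in a $1$-dimensional algebra where having an algebraic vertex contradicts $\widehat{f_h}=0$ applied along the chain of sections; unwinding, some iterated signed section of $f$ has nonzero transform, and a single such section already has nonzero transform by the forward direction, contradicting the hypothesis.

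The cleanest packaging, which I would actually adopt, is to avoid the iteration and instead invoke Lemma~\ref{lem:polyconical zero-f is a zum of line-cones} and Lemma~\ref{lem:sum of polyconical functions} directly: if $\widehat{f}\neq 0$ then $\mathcal{V}_f\neq\emptyset$ (Theorem~\ref{thm:f-zero cone is sum of line-cones}'), pick $\mathbf{v}\in\mathcal{V}_f$ extreme in the convex hull of all vertices appearing in a basic decomposition, and choose a supporting hyperplane $h$ at $\mathbf{v}$ perturbed generically so that $f_h$ is a facet with $\mathbf{v}$ as an algebraic vertex of $f_h$ (here one uses that a pointed tangent cone, after an arbitrarily small tilt of a supporting hyperplane, still has a nontrivial signed section retaining the vertex — this is where the Frettl\"oh--Glazyrin input and the cone arguments of Lemma~\ref{lem:upper cones} are morally doing the work). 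Then $\widehat{f_h}\neq 0$ by Theorem~\ref{thm:f-zero cone is sum of line-cones}' applied in dimension $d-1$, contradicting the hypothesis.

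The main obstacle, and the step I would spend the most care on, is the claim that an algebraic vertex $\mathbf{v}$ of $f$ can always be witnessed as an algebraic vertex of some facet $f_h$ — i.e., choosing the hyperplane $h$ through $\mathbf{v}$ so that the signed section does not accidentally destroy the vertex by turning $\tcone(f,\mathbf{v})$ into a line-cone combination. Lemma~\ref{lem:vertex of a facet is a vertex of polytope} gives the containment $\mathcal{V}_{f_h}\subseteq\mathcal{V}_f$ but I need a form of the reverse statement, namely $\mathcal{V}_f\subseteq\bigcup_h\mathcal{V}_{f_h}$, which is exactly the nontrivial direction; I expect to prove it by analyzing the Fourier--Laplace transform~\eqref{eq:f-l representation} of $\tcone(f,\mathbf{v})$, noting its numerator is a nonzero homogeneous rational function, and showing a generic hyperplane section of that cone inherits a nonzero transform via the residue computation from the forward direction. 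Everything else is bookkeeping with the linearity of sections (already established) and the two structural lemmas from Section~\ref{sec:proof of the cone-like sum}.
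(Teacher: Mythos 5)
Your overall architecture does match the paper's: localize at tangent cones, use Theorem~\ref{thm:sum of line-cones and simplicial cones} and Theorem~\ref{thm:f-zero cone is sum of line-cones} (in their $\mathcal{P}^d$ forms) to translate ``zero transform'' into ``no algebraic vertices'', and reduce everything to a statement about a polyconical function and its hyperplane sections through the apex. But that reduction is exactly the crux, and you never prove it. What you need is the paper's Lemma~\ref{lem:section of polyconical functions}: a polyconical function has zero Fourier--Laplace transform if and only if all of its signed sections by hyperplanes through the origin have zero $(d-1)$-dimensional transform. The two mechanisms you offer in its place do not work. The ``supporting hyperplane at an extreme algebraic vertex, perturbed generically'' step is simply false: a signed section $f_h$ vanishes almost everywhere at points where $f$ has equal one-sided limits across $h$, so a supporting hyperplane through $\mathbf{v}$, or any small generic tilt of it, cuts the pointed cone $\tcone(f,\mathbf{v})$ transversally and gives $(\tcone(f,\mathbf{v}))_h=0$ a.e. Nonzero sections arise only for the finitely many hyperplanes containing a $(d-1)$-dimensional piece of the discontinuity set of $\tcone(f,\mathbf{v})$, and the substantive task is to show that at least one of these yields a section that is not a combination of line-cones. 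The paper does this via Claim~\ref{cl:all cones in upper halfspace} together with Lemma~\ref{lem:upper cones}: modulo line-cones one may assume all cones lie in an open halfspace, where vanishing of the transform is equivalent to the function itself vanishing a.e., and for a.e.-vanishing the section statement is trivial. Saying that Lemma~\ref{lem:upper cones} is ``morally doing the work'' is not a substitute for carrying out this reduction.

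The forward direction as you present it is also unsubstantiated. A ``residue-type operation as $z_d\to\infty$'' extracting $\widehat{f_h}$ from $\widehat{f}$ does not exist in the naive form you describe: even for a box, a one-sided limit in $z_d$ picks out only the extreme facet in that direction, while for an interior hyperplane the exponential contributions from mass on both sides of $h$ interfere and dominate any such limit; one would need a genuine jump/inversion argument, and for unbounded elements of $\mathcal{P}^d$, where $\widehat f$ is defined only through the formal expression \eqref{eq:f-l representation}, the asserted formula is not justified at all. The paper sidesteps this by deriving the forward implication, too, from Lemma~\ref{lem:section of polyconical functions} applied to the tangent cone at each point of $h$, combined with Theorem~\ref{thm:f-zero cone is sum of line-cones}. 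So the gap is concrete: both directions of your argument rest on the cone-versus-section equivalence that you only gesture at, and the substitute arguments you sketch (the perturbed supporting hyperplane, the residue at infinity) would fail.
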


We first prove it for polyconical functions.

\begin{lemma}
	\label{lem:section of polyconical functions}
	The Fourier--Laplace transform of a polyconical function $f$ is zero if and only if for all hyperplanes $h$ through the origin, the $(d-1)$-dimensional Fourier--Laplace transform of the signed section~$f_h$ is zero.
\end{lemma}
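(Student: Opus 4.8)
The plan is to derive both directions from two earlier results: Lemma~\ref{lem:polyconical zero-f is a zum of line-cones} (for polyconical functions, vanishing transform is equivalent to being a linear combination of line-cones) and Lemma~\ref{lem:upper cones} (no cancellation among cones in a half-space). \emph{Forward direction.} If $\widehat f=0$, write $f=\sum_j\mu_j[D_j]$ with the $D_j$ convex line-cones with apex $0$, by Lemma~\ref{lem:polyconical zero-f is a zum of line-cones}. Fix a hyperplane $h$ through $0$. Since each $D_j$ is invariant under translation along its line direction $\mathbf u_j$, every facet $F$ of $D_j$ contains the line $\mathbf R\mathbf u_j$ and is itself a line-cone; hence the signed section $[D_j]_h$ is either $0$ or $\pm[F]$ for the facet $F$ with $\mathrm{span}(F)=h$, i.e.\ a line-cone inside $h$. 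So $f_h=\sum_j\mu_j[D_j]_h$ is a linear combination of indicators of line-cones in $h$, and $\widehat{f_h}=0$.

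\emph{Reverse direction, by contraposition.} Assume $\widehat f\neq0$; I produce a hyperplane $h$ through $0$ with $\widehat{f_h}\neq0$. First normalize $f$: adding a linear combination of line-cones changes neither $\widehat f$ nor any $\widehat{f_h}$ (signed sections of line-cones are line-cones, hence transform-free), so by Claim~\ref{cl:all cones in upper halfspace} we may assume every simplicial cone occurring in $f$ lies in a single open half-space $H=\{\langle\mathbf v,\cdot\rangle>0\}$. Refining the arrangement of all their facet hyperplanes and triangulating without introducing new rays, we get $f=\sum_i\gamma_i[K_i]$ where $\{K_i\}$ is a simplicial fan, every $K_i\setminus\{0\}\subset H$ (so each $K_i$ is pointed), every $\gamma_i\neq0$, and — because $f\not\equiv0$ in $\mathcal P^d$ — at least one $K_i$ is full-dimensional.

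The set $S=\bigcup_iK_i$ is then full-dimensional and is a proper closed subset of $\mathbf R^d$, so $\partial S\neq\emptyset$; a generic point of $\partial S$ lies in the relative interior of a facet $G$ of exactly one cone $K_{i_0}$ (if $G$ were shared with a second cone of the fan, both sides of $G$ would be interior to $S$). Set $h:=\mathrm{span}(G)$. A direct computation of the signed section gives $f_h=\sum_W c_W[W]$, the sum over the $(d-1)$-dimensional cones $W$ of the fan with $W\subset h$, where $c_W$ is $\pm\gamma_i$ or $\gamma_i-\gamma_j$ according to whether one or two full-dimensional cones of the fan meet $h$ along $W$; since $G$ is unshared, the coefficient of the wall $G$ is $\pm\gamma_{i_0}\neq0$. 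These $W$ are pairwise internally disjoint (distinct faces of a fan), pointed, and all contained in $h\cap H$, which is an open half-space of $h$ because $h\neq\mathbf v^\perp$ (indeed $G\subset H$). Applying Lemma~\ref{lem:upper cones} inside $h\cong\mathbf R^{d-1}$ shows $\widehat{f_h}=\sum_W c_W\widehat{[W]}$ vanishes only if every $c_W=0$; as $c_G\neq0$, $\widehat{f_h}\neq0$, contradicting the hypothesis. (For $d=1$ the only hyperplane through $0$ is $\{0\}$, $\widehat{f_{\{0\}}}=f^+(0)-f^-(0)$, nonzero precisely when $f$ is not a scalar multiple of $[\mathbf R]$.)

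The delicate point — and the reason one cannot simply pick any $h$ spanning a facet — is cancellation: many cones $K_i$ may contribute facets to $f_h$ with mixed signs, so $\widehat{f_h}$ can vanish even though $\widehat f\neq0$ (already for interior facets, e.g.\ $[S_1]+[S_2]$ for two adjacent angular sectors). Pushing everything into an open half-space and taking $h$ spanned by a \emph{boundary} facet of $\bigcup_iK_i$ is exactly what guarantees an uncancelled surviving term and allows Lemma~\ref{lem:upper cones} to finish the proof.
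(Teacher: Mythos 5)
Your proof is correct and follows essentially the same route as the paper: both directions reduce, via Claim~\ref{cl:all cones in upper halfspace}, to cones lying in an open half-space and then invoke Lemma~\ref{lem:upper cones} in dimension $d$ and again in dimension $d-1$, together with the observation that signed sections of line-cones are line-cones. Your boundary-facet argument in the reverse direction merely makes explicit the step the paper asserts more briefly, namely that a nonzero half-space-supported polyconical function has a nonzero signed section.
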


\begin{proof}
	Note that the $(d-1)$-dimensional Fourier--Laplace transform of every signed section of a line-cone is zero, as the signed section itself is a line cone.
	
	Now applying Claim~\ref{cl:all cones in upper halfspace} to the cones generating $f$ we get the function~$f^*$ whose support lies in a halfspace $H$.
	From Lemma~\ref{lem:upper cones} it follows that $\widehat f(\cdot)$ is zero if and only if $f^*$ is zero.
	
	But $f^*$ vanishes if and only if all of its signed sections vanish.
	Supports of signed sections of~$f^*$ lie in $H$.
	Therefore, by Lemma~\ref{lem:upper cones} again, the condition that the Fourier--Laplace transform of all signed sections of $f^*$ are zero is equivalent to the condition that all their $(d-1)$-dimensional Fourier--Laplace transforms are zero.
	But the Fourier--Laplace transform of a signed section $f^*_h$ coincides with the Fourier--Laplace transform of a signed section $f_h$, for any $h$, since they differ from each other on linear combinations of line-cones.
\end{proof}

\begin{proof}[Proof of Theorem~\ref{thm:section theorem}]
	By Theorem~\ref{thm:f-zero cone is sum of line-cones}, if $\widehat {f}(\cdot)=0$, then at each point $\mathbf{v}$,  the Fourier--Laplace transform of the tangent cone to $f$ equals zero, and vice versa.
	
	Suppose $\widehat {f}(\cdot)=0$ and $h$ is a some hyperplane.
	Consider the tangent cone $C=\tcone(f, \mathbf{v})$ of a point $\mathbf{v}$ from $h$. By Lemma~\ref{lem:section of polyconical functions}  its Fourier--Laplace transform is zero, therefore the Fourier--Laplace transform of its signed section $C_h$ is also zero. This holds for any point~$\mathbf{v}$ from $h$ and  therefore $\widehat{f}_h=0$.
	
	In the other direction the proof is analogous.  We let $\mathbf{v}$ be any point, and $C:=\tcone(f, \mathbf{v})$. For any hyperplane $h$ through $\mathbf{v}$ we have $\widehat{f}_h=0$, and therefore by Lemma~\ref{lem:sum of polyconical functions}, $\widehat C_h=0$.
	Lemma~\ref{lem:section of polyconical functions} now implies that $\widehat C =0$.
	Since this holds for any point $\mathbf{v}$, we conclude that $\widehat {f}(\cdot)=0$.
\end{proof}

\bigskip

\section{What is a vertex of a generalized polytope?}
\label{sec:definition discussion}

There are several ways to define a vertex of generalized polytope $P$ (or of $f\in \mathcal{P}^d$).
In this section we discuss these possibilities.

In the introduction $\mathbf{v}$ is defined as a geometric vertex of $P$ if $\tcone(P,\mathbf{v})$ is not a line-cone.
Figure~\ref{fig:tangent edges}  shows a generalized polytope in which two edges touch each other at the point 
$\mathbf{v}$.
According to the above definition, $\mathbf{v}$ is a geometric vertex.
But it is not very natural to consider this point as a vertex, because it appears occasionaly and can disappear after a slight mutation of polytope.

One may try to define a geometric vertex when not all connected components of $\tcone(P,\mathbf{v})$ are line-cones.
Such a vertex is shown on Figure~\ref{fig:glued faces}.
Is it natural to take this point for a vertex? Perhaps not.

%
%
%

\begin{center}
\parbox{5.5cm}
{\begin{center}
\includegraphics{fig-vertex-1.mps}\\
\vskip 1.6cm
\figure \label{fig:tangent edges}
\end{center}
}
\parbox{5cm}{\begin{center}
\includegraphics{fig-vertex-2.mps}\\
\vskip -0cm
\figure \label{fig:glued faces}
\end{center}
}	
\parbox{6cm}{\begin{center}
\includegraphics{fig-vertex-3.mps}\\
\vskip 0.9cm
\figure \label{fig:triple vertex}
\end{center}
}	
\end{center}
\bigskip


Here is a another way to define a vertex (see~\cite{gravin2012moments}).

\begin{definition}
	\label{def:temporary definition through cones}
	A point $\mathbf{v}$ is a \textbf{combinatorial vertex} of a generalized polytope $P$ if its tangent cone cannot be partitioned into line-cones.
\end{definition}

Lemma 5 in \cite{gravin2012moments} shows that this definition is equivalent to the following one.

\begin{definition}
	\label{def:temporary definition through triangulations}
	 A point $\mathbf{v}$ is a \textbf{combinatorial vertex} of a generalized polytope $P$ if, in every dissection $P$ into disjoint simplices, the point $\mathbf{v}$ is a vertex of some simplex of the dissection.
\end{definition}


%

It is well-known that some non-convex polytopes $P$ cannot be triangulated into simplices using only vertices of $P$ .
The most famous example is the Sch\"onhardt polytope \cite{schonhardt1928zerlegung}, which is a modification of the triangular prism.

So if we try to define a triangulation of non-convex polytopes we are either forced to use simplices with vertices not only from the set of vertices of $P$ (which are clearly discernible in the case of the Sch\"onhardt polytope), or we are forced to use overlapping simplices by subtracting simplices from each other (see \cite[Remark 5]{gravin2012moments} for a representation of the Sch\"onhardt polytope).
In view of this phenomena, it is natural to modify the latter two definitions by allowing signed versions instead of unions.
We thus arrive at our definition of algebraic vertices, which we repeat from the introduction.


\begin{repdefinition}{def:indicator definition of Fourier vertex through cones}
	For a generalized polyhedron $P$, a point $\mathbf{v} \in P$ is called an {\bf algebraic vertex} of $P$ if the indicator function of its tangent cone $\tcone(P, \mathbf{v})$ cannot be represented as a linear combination of indicator functions of line-cones.
\end{repdefinition}

\begin{definition}
	\label{def:vertex pm-definition}
	A point $\mathbf{v}$ is an \textbf{algebraic vertex} of $P$ if in every representation of the indicator function of $P$ as a linear combination of indicator functions of simplices, the point $\mathbf{v}$ is a vertex of some simplex.
\end{definition}

Theorem \ref{thm:sum of simplices} shows that the set of vertices that satisfy these two definitions  coincide.

\begin{remark}
	\label{rem:fanappie vertices}
	
	In~\cite{gravin2012moments}, instead of the Fourier--Laplace transform the authors consider the Fantappi\`e transform $\mathcal{F}_P$ (which is closely related to the Fourier--Laplace transform) of a generalized polytope $P$, see
	the discussion following Theorem~\ref{thm:sum of simplices}.
	The authors of \cite{gravin2012moments} show that if the set of combinatorial vertices of $P$ is in general position (i.e. the affine span of each $(d+1)$-tuple equals $\mathbf{R}^d$) then it is a set of vertices $V_{\mathcal{F}}$ appearing in $\mathcal{F}_P$.
	Using our approach it is possible to show that $V_{\mathcal{F}}$ is exactly the set of algebraic vertices of $P$.
	The proof is similar to the proof of Lemma~\ref{lem:sum of polyconical functions} and Remark~\ref{rem:unique representation}.
	
\end{remark}

Finally, Figure~\ref{fig:triple vertex} shows the tangent cone of the generalized polytope $P$ which is defined as a union of three cones having a common apex $\mathbf{v}$, such that their sides lie on three lines.
The Fourier--Laplace transform of $\tcone(P,\mathbf{v})$ is zero, so $\mathbf{v}$ is not a vertex (see \cite{barvinok2008integer}, Problem~9.3 for a more general statement).  But is it not, really? The interested reader should decide.



%

\subsection*{Acknowledgement}
We would like to thank Dmitrii Pasechnik and Boris Shapiro for drawing our attention to these questions and for fruitful discussions.
We are also very grateful to the anonymous reviewer of the journal Advances in Mathematics for many useful remarks that greatly improved the text.

The first author was supported by People Programme (Marie Curie Actions) of the European Union's Seventh Framework Programme (FP7/2007-2013) under REA grant agreement n$^\circ$[291734].
The second author acknowledges support from ERC Advanced Research Grant no 267165 (DISCONV) and by Hungarian National Research Grant K 111827. The third author was supported in part by ICERM, Brown University, and in part by 
the FAPESP grant Proc. 2103/03447-6, Brasil. 

A part of the paper was completed while the authors were supported by
Singapore MOE Tier 2 Grant MOE2011-T2-1-090 (ARC 19/11) and the Institute for
Mathematical Sciences at the National University of Singapore, under the program
``Inverse Moment Problems: the Crossroads of Analysis, Algebra, Discrete
Geometry and Combinatorics''.

\nocite{brion1988points,pukhlikov1992riemann,barvinok2008integer,frettloh2009lonely,gravin2012moments, beck2007computing}

\bibliographystyle{abbrv}
\bibliography{vertex.bib}{}
	
\end{document}